\documentclass[10pt]{amsart}
\usepackage{}
\usepackage{amsfonts}
\usepackage{amsfonts,latexsym,rawfonts,amsmath,amssymb,amsthm}
\usepackage[plainpages=false]{hyperref}
\usepackage{mathrsfs}
\RequirePackage{amsmath} \RequirePackage{amssymb}
\usepackage{enumitem}
\usepackage{fancyhdr}
\usepackage{amscd,latexsym,amsthm,amsfonts,amssymb,amsmath,amsxtra}
\usepackage{amscd,mathrsfs,latexsym,amsmath,amsthm,amssymb,amsxtra,}
\usepackage{latexsym,amsmath,amsthm,amssymb,amsxtra,mathrsfs}
\usepackage[all]{xy}
\usepackage{longtable}
\usepackage{verbatim}
\usepackage{bm}
\usepackage{mathtools}
\usepackage{color}

\makeatletter
\renewcommand{\section}{\@startsection{section}{1}%
  \z@{.7\linespacing\@plus\linespacing}{.5\linespacing}%
  {\normalfont\bfseries\centering}}
\renewcommand{\@secnumfont}{\bfseries}
\makeatother

\numberwithin{equation}{section}

\newcommand{\dd}{\,dV_g}

\newcommand{\beq}{\begin{equation}}
\newcommand{\eeq}{\end{equation}}
\newcommand{\beqs}{\begin{eqnarray*}}
\newcommand{\eeqs}{\end{eqnarray*}}
\newcommand{\beqn}{\begin{eqnarray}}
\newcommand{\eeqn}{\end{eqnarray}}
\newcommand{\beqa}{\begin{array}}
\newcommand{\eeqa}{\end{array}}

\def\lra{\longrightarrow}

\def\bc{\begin{center}}
\def\ec{\end{center}}

\def\begeq{\begin{equation}}
\def\endeq{\end{equation}}
\def\and{\quad{\rm and}\quad}

\let\lra=\longrightarrow

\def\mapright\#1{\,\smash{\mathop{\lra}\limits^{\#1}}\,}

\def\Ric{\mathrm{Ric}}

\newtheorem{prop}{Proposition}[section]

\newtheorem{rem}[prop]{Remark}

\newtheorem{conj}[prop]{Conjecture}

\allowdisplaybreaks[1]
\numberwithin{equation}{section}
\newtheorem{theorem}{Theorem}[section]
\newtheorem{lemma}[theorem]{Lemma}
\newtheorem{proposition}[theorem]{Proposition}
\newtheorem{corollary}[theorem]{Corollary}
\DeclareMathOperator{\Div}{div}
\DeclareMathOperator{\Dom}{Dom}
\newcommand{\norm}[1]{\left\lVert#1\right\rVert}
\newcommand{\ip}[2]{\left\langle#1,#2\right\rangle}
\title{Complete K\"ahler Manifolds with pinched Ricci curvature}

\author{Yuxing Deng$^{*}$}
\address{School of Mathematics and Statistics, Beijing Institute of Technology, Beijing, 100081, China}
\email{6120180026@bit.edu.cn}

\author{Ruyi Ji}
\address{School of Mathematics and Statistics, Beijing Institute of Technology, Beijing, 100081, China}
\email{3120266150@bit.edu.cn}

\keywords{K\"ahler manifold, Ricci pinching, Ricci flat}
\thanks {* Supported by National Key R$\&$D Program of China 2022YFA1007600 and NSFC No.12571059.}

\date{\today}

\begin{document}
\begin{abstract}
In this paper, we prove that a connected complete K\"ahler manifold of complex
dimension at least two satisfying $\Ric\geq\varepsilon Rg\geq0$
for a fixed $\varepsilon>0$ is compact or Ricci-flat.
\end{abstract}
\maketitle

\section{Introduction}\label{sec:introduction}

The Bonnet--Myers theorem gives compactness when the Ricci tensor
has a uniform positive lower bound. A natural question is whether
one may replace that lower bound by a pointwise comparison of the
Ricci eigenvalues. More precisely, it is natural to make the following conjecture.
\begin{conj}[Hamilton-Ni]\label{conj-1}
    If $(M,g)$ is  an $n(\ge 3)$-dimensional  complete
Riemannian manifold satisfies
\begin{equation}\label{eq:pinching}
 \Ric\geq\varepsilon Rg\geq0,
 \quad\text{for a fixed constant }\varepsilon>0,
\end{equation}
then it must be compact or Ricci-flat.
\end{conj}

Hamilton proposed Conjecture \ref{conj-1} for $n=3$ and Ni proposed Conjecture \ref{conj-1} for $n\ge3$\cite{Ni-05,Ni}. In dimension three, Chen-Zhu proved Conjecture \ref{conj-1} by assuming bounded and nonnegative sectional curvature \cite{ChenZhu}. They also proved the compactness for manifolds with  a stronger pinching condition in higher dimensions. In dimension three, Lott  proved Conjecture \ref{conj-1} on manifolds with  quadratic curvature decay ~\cite{Lott}. He also provided another proof for Chen-Zhu's result in dimension three. Later on, Deruelle-Schulze-Simon proved Conjecture \ref{conj-1} for $n=3$ by assuming the curvature is bounded~\cite{DSS}.  Lee and Topping proved Conjecture \ref{conj-1} for $n=3$~\cite{LeeTopping}. Recently, Deruelle-Lee-Schulze-Simon-Topping proved the compact-or-flat alternative under the PIC1 pinching condition in all dimensions~\cite{DLSST}. 
 All these results use the Ricci flow. Very recently, Xia and Zheng studied Conjecture \ref{conj-1} for manifolds with boundary in dimension three\cite{XZ}.


In the K\"ahler case, earlier K\"ahler vanishing results of Ni--Shi--Tam~\cite{NST}
treated pinched nonnegative closed $(1,1)$-forms under additional
curvature and decay assumptions. Zhu and the first named author proved the compact-or-flat alternative on expanding K\"ahler Ricci soliton with nonnegatively pinched Ricci curvature \cite{DZ}.   More recently,
Liu--Ni~\cite{LiuNi} proved flatness under nonnegative bisectional
curvature and Euclidean volume growth, and Ricci-flatness under
nonnegative Ricci curvature, Euclidean volume growth, and quadratic
curvature decay. Their work also allows a logarithmically degenerating
pinching constant in these volume-growth results. Yin~\cite{Yin}
removed the volume-growth assumption for uniform pinching under
nonnegative holomorphic bisectional curvature. The theorem considered
here retains global uniform Ricci pinching and 
additional assumptions on bisectional curvature and volume growth.

In this paper, we prove Conjecture \ref{conj-1}  when the manifold is K\"ahler.

\begin{theorem}\label{thm:main}
Let $(M^{2m},g,J)$ be a connected complete K\"ahler manifold of complex dimension $m\geq2$. If
\eqref{eq:pinching} holds for a fixed $\varepsilon>0$, then $M$ is
compact or Ricci-flat.
\end{theorem}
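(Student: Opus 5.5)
Assume $M$ is noncompact and not Ricci-flat, and aim for a contradiction. The plan is to play the pinching \eqref{eq:pinching} against the Kähler structure through the Ricci form $\rho$, which is a closed real $(1,1)$-form. Pinching says $\rho\ge \varepsilon R\,\omega\ge0$, so $\rho$ is a nonnegative closed $(1,1)$-form that is uniformly comparable to its trace: every eigenvalue of $\rho$ lies between $\varepsilon R$ and $(1-(m-1)\varepsilon)R$. Since $\Ric\ge0$ and $M$ is noncompact, Calabi--Yau gives at least linear volume growth, and Bishop--Gromov gives at most Euclidean volume growth. The key scalar quantity is the scalar curvature $R=\operatorname{tr}_\omega\rho\ge0$. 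In the Kähler case it satisfies
\[
\Delta R = -\Delta\log\det g\text{-type identity},\qquad
\partial\bar\partial\text{ of }\log(\omega_\rho^m/\omega^m)\text{ relates to }\rho .
\]
More concretely, I would use the identity $\Delta R=\langle \Ric,\partial\bar\partial\text{-Hessian}\rangle$ coming from the contracted Bianchi identity, $\nabla^{\bar j}R_{i\bar j}=\partial_i R$.

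Step one is to derive a differential inequality from pinching. Use $\rho^m\ge(\varepsilon R)^m\omega^m$ and $\rho\wedge\omega^{m-1}=\tfrac{R}{m}\omega^m$. Let $f=\log(\rho^m/\omega^m)$ wherever $R>0$. Since $\rho=-\sqrt{-1}\partial\bar\partial\log\det g$, we get $\sqrt{-1}\partial\bar\partial f=\rho-\Ric(\rho)$. Here $\Ric(\rho)$ is the Ricci form of the (possibly degenerate) metric $\rho$. Taking the trace with respect to $\rho$ gives $\Delta_\rho f = \operatorname{tr}_\rho\omega\cdot(\ldots)$. The simpler route is to consider the function $u=R$ and prove a Liouville-type statement. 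By $\nabla^{\bar j}R_{i\bar j}=\nabla_iR$ and pinching, $R$ controls $|\Ric|\le C R$. Also $\Delta R=\Delta_{\bar\partial}$-type expression $\ge -C|\Rm|\,R$, which is not sign-definite. So instead I would exploit the strong maximum principle via integration.

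Step two is an $L^1$ Liouville/integral argument. The form $\rho$ is closed and $\rho\ge\varepsilon R\omega$, hence $\rho^k\wedge\omega^{m-k}\ge c(\varepsilon)R^k\omega^m$. Then
\[
\int_{B(r)}R\,\omega^m \lesssim \int \phi\,\rho\wedge\omega^{m-1} = -\int \log\det g\;\sqrt{-1}\partial\bar\partial\phi\wedge\omega^{m-1}.
\]
I would choose cutoffs $\phi$ adapted to a distance or exhaustion function and iterate over powers $\rho^k$. The target is to show that $\int_{B(r)}R$ grows slower than any admissible rate while pinching forces it to grow at least like $\mathrm{Vol}(B(r))r^{-2}$. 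This lower bound comes from the fact that a pinched positive Ricci form cannot be small on large balls: a Myers-type argument along geodesics gives $\int_\gamma \Ric(\dot\gamma,\dot\gamma)\le C$, so $\int_\gamma R\le C/\varepsilon$ along every minimizing ray.

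The main obstacle is turning the ray estimate $\int_\gamma R\,ds\le C(m,\varepsilon)$, which follows from the second variation on minimizing geodesics exactly as in Myers' theorem, into $R\equiv0$. My first attempt would be to combine it with the Kähler identity $\Delta\log\det(\rho/\omega)$ to show that $v=\log R$ is plurisubharmonic-type up to controlled errors. Then a Yau-type Liouville theorem for functions with $\int_\gamma e^{v}<\infty$ along all rays would be applied. Failing that, I would run Kähler–Ricci flow from $g$, show that the pinching $\rho\ge\varepsilon R\omega$ is preserved, as in the Ricci-flow approaches cited, and derive a contradiction from the expanding-soliton blow-down classification of \cite{DZ}. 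This last step is the one I am least sure of.
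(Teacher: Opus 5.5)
Your proposal has a genuine gap: none of its steps produces an inequality that forces $R\equiv0$.

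\begin{itemize}
\item \emph{Step two.} The identity $\int\phi\,\rho\wedge\omega^{m-1}=-\int\log\det g\,\sqrt{-1}\partial\bar\partial\phi\wedge\omega^{m-1}$ treats $\log\det g$ as a global function. It is defined only in a holomorphic chart; it changes by the pluriharmonic term $\log|\det(\partial w/\partial z)|^2$. Also, $\rho$ is in general not $\sqrt{-1}\partial\bar\partial$-exact on $M$. Even when a global potential exists, you would need growth control on it, and nothing provides this.

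\item \emph{The ray estimate.} The Myers-type bound is correct: second variation along a ray with a piecewise linear test function gives $\int_{t_0}^\infty R(\gamma(s))\,ds\le(2m-1)/(\varepsilon t_0)$. But it is an \emph{upper} bound. It cannot supply the lower bound $\int_{B(r)}R\gtrsim\mathrm{Vol}(B(r))\,r^{-2}$ you need, and no such lower bound is derived anywhere. Worse, it is scale invariant and is satisfied by every profile $R\lesssim d(o,\cdot)^{-2}$, which is exactly the borderline behaviour the theorem must exclude. It also sees $R$ only on a measure-zero family of rays. Since, as you observe, $\Delta R$ has no sign, there is no Harnack inequality to transfer it to balls.

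\item \emph{The Liouville step.} The ``Yau-type Liouville theorem'' for functions integrable along rays does not exist. Making $\log R$ plurisubharmonic up to controlled errors would require controlling the Ricci form of $\rho$, i.e.\ second derivatives of $\Ric$, which pinching does not give.

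\item \emph{The K\"ahler--Ricci flow fallback.} It rests on three unproved facts: existence of the flow from complete data with possibly unbounded curvature; preservation of $\rho\geq\varepsilon R\omega\geq0$ (to my knowledge, nonnegativity of Ricci curvature is already not preserved by K\"ahler--Ricci flow in general); and a blow-down to an expanding soliton, which requires long-time existence and $|\mathrm{Rm}|\leq C/t$.
\end{itemize}

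The missing idea is a localized integral estimate with a strict gain coming from the pinching. The paper obtains it by duality. Pair the contracted Bianchi identity $\Div(\eta E_m)=c_m\,d(\eta R)+T(\nabla\eta,\cdot)$, where $E_m=\Ric-\frac{R}{2m}g$, with $\nabla u_\tau$ for $u_\tau=(L+\tau)^{-1}(\eta R)$.
\begin{itemize}
\item The K\"ahler identity $\norm{H^+_0u}_2^2=c_m\norm{Lu}_2^2$ and $|E_m|^2\leq c_mq^2R^2$ bound the interior term by $c_mq\norm{\eta R}_2^2$. This leaves the gain $c_m(1-q)=(m-1)\varepsilon>0$.
\item For the cutoff term, Bochner and pinching give $\varepsilon\int R|\nabla u_\tau|^2\leq\frac12\norm{Lu_\tau}_2^2$.
\item Letting $\tau\to0$ yields $\int\eta^2R^2\leq C\int R|\nabla\eta|^2$, which is scale-critical when $m=2$.
\end{itemize}
For $m\geq3$ one must reach the critical exponent $p=m$. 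This is where powers of $\rho$ really enter. They do not enter through the pointwise bound $\rho^k\wedge\omega^{m-k}\geq cR^k\omega^m$. They enter through the closed forms $(\rho+\delta\omega)^k\wedge\omega^{m-k-2}$, which define weighted elliptic operators. Via the mixed Hodge--Riemann relation, these give a strict absorption with factor $q=1-2m\varepsilon$, and hence
\[
\int\eta^2R^p\leq C\bigl(\delta^{-1}\int(R+\delta)^p|\nabla\eta|^2+\delta^p\int\eta^2\bigr).
\]
Ball iteration with Bishop--Gromov then gives $R\in L^m(M)$, and an annular cutoff forces $\int_MR^m\dd=0$. Without such a gain, your plan to ``iterate over powers $\rho^k$'' has nothing to iterate.
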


Assume that $M$ is noncompact. A localized
quadratic estimate, based on the contracted Bianchi identity, a
K\"ahler Hessian identity and spectral theorem, first gives
\[
 \int_M\eta^2R^2\dd
 \leq C\int_M R|\nabla\eta|^2\dd.
\]
This is an estimate with a compactly supported source, so it does not
presuppose $R\in L^2(M)$. In complex dimension two, iteration inside
a fixed outer ball gives this integrability, and an annular cutoff
then forces $R\equiv0$; see Corollary~\ref{cor:kahler-surface}.
For higher dimensions, we obtain estimates for higher powers by using
the closed forms
\[
 (\rho+\delta\omega)^k\wedge\omega^{m-k-2},
 \qquad 1\leq k\leq m-2,\qquad \delta>0,
\]
where $\omega$ and $\rho$ are the K\"ahler and Ricci forms.
The mixed Hodge--Riemann relation gives a strict absorption factor
$1-2m\varepsilon$ on the corresponding primitive subspace.
Each mixed form also defines a symmetric elliptic operator in a
weighted $L^2$ space. The main analytic point is to keep the resolvent parameter positive
while using a previously established lower-order estimate. On the
same fixed manifold, an estimate of order $k$ then yields one of
order $k+2$. The quadratic estimate supplies the cubic case
separately, and the two induction chains give, for $2\leq p\leq m$,
\begin{equation}\label{eq:intro-hierarchy}
 \int_M\eta^2R^p\dd
 \leq C_{m,\varepsilon,p}\left(
 \delta^{-1}\int_M(R+\delta)^p|\nabla\eta|^2\dd
 +\delta^p\int_M\eta^2\dd\right).
\end{equation}
The constants are independent of $\delta$ and $\eta$.
At the critical exponent $p=m$, an iteration inside a fixed outer
ball and Bishop--Gromov comparison imply $R\in L^m(M)$.
A final annular application of the same estimate forces this integral
to vanish. Thus the proof uses $L^2$ operator theory for auxiliary
functions, while deriving all needed integrability of the curvature.

The paper is organized as following. Section~\ref{sec:preliminaries} records the conventions, operator
preliminaries, and spectral measures.
Section~\ref{sec:resolvent-estimates} establishes the quadratic
estimate and proves the surface case.
Section~\ref{sec:mixed-ricci} develops the mixed forms and weighted
energy identities, and proves the localized estimates for powers
of the scalar curvature. Section~\ref{sec:proof} uses these estimates
to establish critical global integrability and complete the proof
of the main theorem. Appendices~\ref{app:approximation} and~\ref{app:mixed-hr}
collect the approximation argument and the pointwise mixed
Hodge--Riemann relation.

\medskip
\noindent\textbf{Declaration of AI use.}
The authors used ChatGPT for exploratory calculations in Lemma 4.1 and language editing during the preparation of the manuscript.
The authors take full responsibility for
the results of this paper.

\section{Preliminaries}\label{sec:preliminaries}

We work on the noncompact manifold in Theorem~\ref{thm:main}.
Tracing \eqref{eq:pinching} settles the case
$\varepsilon>1/(2m)$ immediately. Hence we assume
\begin{equation}\label{eq:q}
 0<\varepsilon\leq\frac1{2m},\qquad
 q=1-2m\varepsilon\in[0,1),\qquad c_m=\frac{m-1}{2m}.
\end{equation}
Throughout the curvature argument, $m$ and $\varepsilon>0$ are
fixed. Constants denoted by $C$ or $c>0$ may change from line to
line and depend only on $m,\varepsilon$, and the indicated integer
orders, unless an additional dependence is displayed. The constants
in the curvature estimates are independent of regularization
parameters, test functions, their supports, and cutoff radii.
No uniformity as $\varepsilon\downarrow0$ or $m\to\infty$ is
asserted. Local comparison factors, denoted by $C_{\mathrm{loc}}$,
and local regularity constants may depend on a fixed compact set
and the fixed coefficients. They are used with the regularization
parameters fixed and are distinct from the uniform constants in
the curvature estimates.

\subsection*{Conventions and operator preliminaries}
Here, we introduce some definitions, notation, and basic facts that will be used later.

We use $\Delta=\Div\nabla$ and $L=-\Delta$. All tensor norms and
contractions are induced by the real metric. Set
\begin{equation}\label{eq:conventions}
 \begin{gathered}
 \omega(X,Y)=g(JX,Y),\qquad \rho(X,Y)=\Ric(JX,Y),\\
 d^cu=-du\circ J,\qquad dV_g=\frac{\omega^m}{m!}.
 \end{gathered}
\end{equation}
Both $\omega$ and $\rho$ are closed.
The Lefschetz operator is $L_\omega\beta=\omega\wedge\beta$.
We denote its adjoint with respect to the pointwise metric on forms
by $\Lambda_\omega$; it lowers degree by two. Thus, for forms of
the appropriate degrees,
\[
 \ip{\Lambda_\omega\beta}{\gamma}
     =\ip{\beta}{\omega\wedge\gamma}.
\]
For a real two-form $\beta$, the contraction is
\[
 \Lambda_\omega\beta=\ip{\beta}{\omega}
        =\sum_{i=1}^m\beta(e_i,Je_i),\qquad
 \Lambda_\omega\omega=m.
\]
Here $(e_i,Je_i)_{i=1}^m$ is any real orthonormal frame adapted to $J$.
These expressions are independent of the chosen frame.
The $J$-invariant and $J$-anti-invariant parts of the Hessian are
\begin{equation}\label{eq:hessian}
 H^\pm u(X,Y)=\frac12\bigl(\nabla^2u(X,Y)
                  \pm\nabla^2u(JX,JY)\bigr),\qquad
 H^+_0u=H^+u-\frac{\Delta u}{2m}g.
\end{equation}
They are orthogonal, and direct differentiation gives
\begin{equation}\label{eq:ddc-hessian}
 dd^cu(X,JY)=2H^+u(X,Y),\qquad
 \Lambda_\omega dd^cu=\Delta u,\qquad
 |dd^cu|^2=2|H^+u|^2.
\end{equation}

At a point, choose a real orthonormal Ricci eigenframe
$(e_i,Je_i)_{i=1}^m$, with dual coframe $(\theta_i,\psi_i)$, and
put $\alpha_i=\theta_i\wedge\psi_i$. Then
\begin{equation}\label{eq:eigenvalues}
 \omega=\sum_i\alpha_i,\qquad
 \rho=\sum_i\lambda_i\alpha_i,\qquad
 R=2\sum_i\lambda_i,\qquad \lambda_i\geq\varepsilon R.
\end{equation}
A real $(1,1)$-form is identified with its Hermitian matrix in
the unitary coframe $(\theta_i+\mathrm i\psi_i)/\sqrt2$.
Its squared norm is the sum of the squared absolute values of the
matrix entries; in particular, $|\omega|^2=m$.

For a bounded linear map $A$ between inner-product spaces, its
\emph{operator norm} is
\[
 \norm A_{\mathrm{op}}=\sup_{\norm X=1}\norm{AX}.
\]
For a real symmetric covariant two-tensor $T$, let $T^\sharp$ be
the self-adjoint endomorphism determined by
$g(T^\sharp X,Y)=T(X,Y)$. We use the pointwise operator norm
\[
 |T|_{\mathrm{op}}
   =\norm{T^\sharp}_{\mathrm{op}}
   =\sup_{|X|=|Y|=1}|T(X,Y)|
   =\max_j|\mu_j|,
\]
where $\mu_j$ are the real eigenvalues of $T^\sharp$. Consequently,
\[
 |T(X,Y)|\leq |T|_{\mathrm{op}}|X||Y|.
\]
This differs from the tensor norm $|T|=(\sum_j\mu_j^2)^{1/2}$,
although $|T|_{\mathrm{op}}\leq |T|$.

We write $C_c^\infty(M)$ for the smooth compactly supported test
functions. The space of distributions on $M$ is defined by
\[
 \mathcal D'(M):=\bigl\{T:C_c^\infty(M)\to\mathbb C:
                 T\text{ is linear and continuous}\bigr\},
\]
where $C_c^\infty(M)$ carries its usual test-function topology.
A locally integrable function $f$ is identified with the distribution
$\phi\mapsto\int_M f\phi\,dV_g$.

We take Hilbert-space inner products to be linear in the first argument;
in $L^2(M,w\,dV_g)$, $\ip{u}{v}=\int_Mu\bar v\,w\,dV_g$.
An operator $A$ on a Hilbert space $\mathcal H$ is a linear map
\[
 A:\Dom(A)\subset\mathcal H\longrightarrow\mathcal H,
\]
where $\Dom(A)$ is a specified linear subspace of $\mathcal H$,
called the \emph{domain} of $A$: it consists of the vectors on which
$A$ is defined. The operator is \emph{densely defined} if
$\overline{\Dom(A)}=\mathcal H$, with closure taken in the norm of
$\mathcal H$. For a densely defined $A$, its \emph{adjoint} $A^*$
is defined as follows: a vector $v\in\mathcal H$ belongs to
$\Dom(A^*)$ if there exists $f\in\mathcal H$ such that
\[
 \ip{Au}{v}_{\mathcal H}=\ip{u}{f}_{\mathcal H}
 \qquad\text{for every }u\in\Dom(A).
\]
In this case $A^*v:=f$; the vector $f$ is unique because $\Dom(A)$
is dense in $\mathcal H$.

The \emph{graph} of $A$ is
\[
 \mathcal G(A):=\{(u,Au):u\in\Dom(A)\}
                   \subset\mathcal H\times\mathcal H.
\]
The operator $A$ is \emph{closed} if $\mathcal G(A)$ is closed in the
product Hilbert-space norm, and \emph{closable} if the closure of
$\mathcal G(A)$ is the graph of an operator. In the latter case,
this operator is called the \emph{closure} of $A$, denoted by
$\overline A$, and is defined by
\[
 \mathcal G(\overline A)
       =\overline{\mathcal G(A)}^{\,\mathcal H\times\mathcal H}.
\]
Explicitly, $u\in\Dom(\overline A)$ and $\overline Au=v$ if and only
if there exists a sequence $u_j\in\Dom(A)$ such that
\[
 u_j\longrightarrow u,\qquad Au_j\longrightarrow v
             \qquad\text{in }\mathcal H.
\]
Closability ensures that $v$ is uniquely determined by $u$, independently
of the approximating sequence. The closure extends $A$ and is closed.
Every densely defined symmetric operator is closable.

A densely defined operator $A$ is \emph{symmetric} if
$\ip{Au}{v}=\ip{u}{Av}$ for all $u,v\in\Dom(A)$.
It is \emph{self-adjoint} if $A=A^*$, including
equality of domains. A densely defined symmetric operator $A_0$ is
\emph{essentially self-adjoint} if its closure $\overline{A_0}$ is
self-adjoint, equivalently $\overline{A_0}=A_0^*$. In this case
$\overline{A_0}$ is its unique self-adjoint extension.

For a closed operator $A$, its \emph{graph norm} is
\[
 \norm{u}_{A}=\bigl(\norm{u}_{\mathcal H}^2
                         +\norm{Au}_{\mathcal H}^2\bigr)^{1/2},
 \qquad u\in\Dom(A).
\]
A linear subspace $\mathcal C\subset\Dom(A)$ is a \emph{graph core}
for $A$ if it is dense in $\Dom(A)$ in this norm. Equivalently, for
every $u\in\Dom(A)$ there are $u_j\in\mathcal C$ such that
\[
 \norm{u_j-u}_{\mathcal H}+\norm{Au_j-Au}_{\mathcal H}
 \longrightarrow0.
\]
These operator notions refer to the stated Hilbert space.
In Lemma~\ref{lem:operator}, the space is $L^2(M,w\,dV_g)$.

\subsection*{Spectral measures and functional calculus}
Let $S$ be a nonnegative self-adjoint operator on a complex Hilbert
space $\mathcal H$. The spectral theorem associates to $S$ a
\emph{projection-valued spectral measure} $E_S$ on $[0,\infty)$;
see, for example, \cite[Section~4.7]{Gross}.
For each Borel set $B$, $E_S(B)$ is an orthogonal projection on
$\mathcal H$, selecting the part of a vector with spectral values
in $B$. It satisfies
\[
 E_S(\varnothing)=0,\qquad E_S([0,\infty))=I,\qquad
 E_S(B)E_S(C)=E_S(B\cap C).
\]
For pairwise disjoint Borel sets $B_j$,
$E_S(\bigcup_jB_j)F=\sum_jE_S(B_j)F$, with convergence in
$\mathcal H$ for every $F\in\mathcal H$.
Thus $E_S$ takes values in operators. For a fixed vector $F$, the
associated \emph{scalar spectral measure}
\[
 \nu_F(B):=\norm{E_S(B)F}_{\mathcal H}^2
          =\ip{E_S(B)F}{F}_{\mathcal H},\qquad
 \nu_F([0,\infty))=\norm F_{\mathcal H}^2,
\]
is an ordinary finite nonnegative measure.

For a bounded Borel function $b$, the functional calculus defines
\[
 \begin{aligned}
 b(S)&=\int_{[0,\infty)}b(\lambda)\,dE_S(\lambda),\\
 \norm{b(S)F}_{\mathcal H}^2
   &=\int_{[0,\infty)}|b(\lambda)|^2\,d\nu_F(\lambda),\\
 \ip{b(S)F}{F}_{\mathcal H}
   &=\int_{[0,\infty)}b(\lambda)\,d\nu_F(\lambda).
 \end{aligned}
\]
The same construction gives
\[
 S=\int_{[0,\infty)}\lambda\,dE_S(\lambda),\qquad
 \Dom(S)=\left\{F\in\mathcal H:
       \int_{[0,\infty)}\lambda^2\,d\nu_F(\lambda)<\infty\right\}.
\]
For a Hermitian matrix with an orthonormal eigenbasis $e_j$,
$Se_j=\lambda_je_j$ and $F=\sum_jc_je_j$, these formulas reduce to
$E_S(B)F=\sum_{\lambda_j\in B}c_je_j$ and
$\nu_F=\sum_j|c_j|^2\delta_{\lambda_j}$, with
$\delta_{\lambda_j}$ the unit point mass at $\lambda_j$.
The general formulas require no eigenbasis. The orthogonal
projection $P_{\ker S}$ onto $\ker S$ satisfies
\[
 E_S(\{0\})=P_{\ker S},\qquad
 \nu_F(\{0\})=\norm{P_{\ker S}F}_{\mathcal H}^2.
\]
\subsection*{Volume comparison and cutoffs}
Write $V(r)=\operatorname{Vol}B(o,r)$ for a fixed $o\in M$.
Since $\Ric\geq0$, Bishop--Gromov volume comparison gives
\begin{equation}\label{eq:volume}
 V(r)\leq\omega_{2m}r^{2m},
\end{equation}
where $\omega_{2m}$ is the Euclidean unit-ball volume.
Moreover, $M$ has infinite volume. Indeed, let $\gamma$ be a
minimizing ray from $o$ and set $r_j=4^j$. The balls
$B(\gamma(r_j),r_j/2)$ are pairwise disjoint, and comparison with
$B(\gamma(r_j),r_j+1)\supset B(o,1)$ shows that each has volume at
least $4^{-2m}V(1)$.

Only gradient bounds for cutoffs will be used. For $0<s<t$, define
\begin{equation}\label{eq:cutoff}
 \eta_{s,t}(x)=
 \begin{cases}
  1, & d(o,x)\leq s,\\[2pt]
  \dfrac{t-d(o,x)}{t-s}, & s<d(o,x)<t,\\[4pt]
  0, & d(o,x)\geq t.
 \end{cases}
\end{equation}
It has compact support by completeness, equals one on $B(o,s)$,
and satisfies $|\nabla\eta_{s,t}|\leq(t-s)^{-1}$ almost everywhere.
The compactly supported Lipschitz cutoffs used here are admissible
in the estimates below, which involve only the test function and its
first weak derivatives with locally bounded coefficients.
Proposition~\ref{prop:compact-approximation} in Appendix~\ref{app:approximation}
provides smooth approximations with a common compact support and
strong $W^{1,2}$ convergence, and proves passage to all the weighted
integrals used here. It also justifies the compactly supported
Sobolev tests in Lemma~\ref{lem:operator}. This approximation is
distinct from approximation in an operator graph norm.

\section{Resolvents and quadratic Ricci pinching}\label{sec:resolvent-estimates}

In this section, we derive a localized quadratic estimate from resolvents and
K\"ahler energy identities. This yields the surface case of
Theorem~\ref{thm:main}.

\subsection*{Resolvents and the K\"ahler energy identities}

We begin with the operator facts used for both the ordinary
Laplacian and the mixed operators.

\begin{lemma}\label{lem:operator}
Let $w$ be a smooth positive function on a complete Riemannian
manifold, and let $D$ be a smooth positive definite symmetric tensor
such that $D\leq Cwg$. The operator
\[
 S_0u=-w^{-1}\Div(D\nabla u),\qquad
 \Dom(S_0)=C_c^\infty(M),
\]
is essentially self-adjoint and nonnegative in $L^2(M,w\,dV_g)$.
Its closure $S$ has $C_c^\infty(M)$ as a graph core, and its domain is
\[
 \begin{gathered}
 \Dom(S)=\bigl\{u\in L^2(M,w\,dV_g):\\
 -w^{-1}\Div(D\nabla u)\in L^2(M,w\,dV_g)
 \text{ in }\mathcal D'(M)\bigr\}.
 \end{gathered}
\]
Here the differential expression is interpreted as a local
distribution relative to the Riemannian density $dV_g$:
$Su=f$ means
\[
 \int_M u\,[-\Div(D\nabla\overline\phi)]\dd
       =\int_M wf\overline\phi\dd,
 \qquad \phi\in C_c^\infty(M).
\]
In particular, the distribution is represented by a function in the
stated weighted $L^2$ space. The closure satisfies
\begin{equation}\label{eq:energy}
 \int_M D(\nabla u,\nabla\bar u)\dd
      =\ip{u}{Su}_{L^2(w\,dV_g)},\qquad u\in\Dom(S).
\end{equation}
For $F\in L^2(w\,dV_g)$ and $\tau>0$, put
$u=(S+\tau)^{-1}F$ and $v=Su$. Then
\begin{equation}\label{eq:spectral}
 \begin{gathered}
 \norm v_{L^2(w\,dV_g)}\leq\norm F_{L^2(w\,dV_g)},\qquad
 \tau\norm u_{L^2(w\,dV_g)}\leq\norm F_{L^2(w\,dV_g)},\\
 \ip{u}{Su}\leq\frac{\norm F_{L^2(w\,dV_g)}^2}{4\tau}.
 \end{gathered}
\end{equation}
If $\ker S=\{0\}$, then $v\to F$ strongly as $\tau\to0$.
\end{lemma}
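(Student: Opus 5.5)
The plan is to run a Chernoff/Strichartz-type argument for essential self-adjointness, using completeness together with the bound $D\leq Cwg$. Then read off the remaining assertions from the spectral theorem.

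\emph{Symmetry and nonnegativity.} For $u,\phi\in C_c^\infty(M)$, integrating by parts against $w\,dV_g$ gives
\[
\ip{S_0u}{\phi}=\int_M D(\nabla u,\nabla\bar\phi)\dd .
\]
Hence $S_0$ is symmetric and nonnegative. Its closure $S$ is therefore closed, symmetric and nonnegative, and $C_c^\infty(M)$ is a graph core for $S$ by the definition of the closure.

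\emph{Essential self-adjointness.} It suffices to show that $\ker(S_0^*+1)=\{0\}$. The adjoint $S_0^*$ is exactly the maximal operator described in the statement: $u\in L^2(w)$ with $-w^{-1}\Div(D\nabla u)\in L^2(w)$ in $\mathcal D'$. So let $u\in L^2(w)$ satisfy $\Div(D\nabla u)=wu$ distributionally. Since $D$ is smooth and positive definite, the operator is elliptic, and local elliptic regularity makes $u$ smooth.

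Take the cutoffs $\eta=\eta_{s,2s}$ from \eqref{eq:cutoff}; they are compactly supported by completeness. Multiply the equation by $\eta^2\bar u$ and integrate:
\[
\int\eta^2 D(\nabla u,\nabla\bar u)+\int\eta^2 w|u|^2=-2\int\eta\bar u\,D(\nabla u,\nabla\eta).
\]
Apply Cauchy--Schwarz for the form $D$, together with $D(\nabla\eta,\nabla\eta)\leq Cw|\nabla\eta|^2\leq Cws^{-2}$. This gives
\[
\int\eta^2 D(\nabla u,\nabla\bar u)+\int\eta^2w|u|^2\leq \tfrac12\int\eta^2D(\nabla u,\nabla\bar u)+4Cs^{-2}\int w|u|^2 .
\]
Letting $s\to\infty$ yields $\int w|u|^2=0$. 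The same argument handles $\ker(S_0^*-\mathrm i)$ if one prefers the deficiency-index form. This cutoff step is the main point where the hypothesis $D\leq Cwg$ enters, and it is the step I expect to be the main obstacle. The Lipschitz cutoff is admissible here by the approximation proposition in the appendix. The conclusion is $S=\overline{S_0}=S_0^*$, which also identifies $\Dom(S)$ as stated.

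\emph{Energy identity \eqref{eq:energy}.} First, \eqref{eq:energy} holds on $C_c^\infty(M)$. Given $u\in\Dom(S)$, choose $u_j\in C_c^\infty(M)$ with $u_j\to u$ and $S_0u_j\to Su$ in $L^2(w)$, which is possible because $C_c^\infty(M)$ is a graph core. The energies then form a Cauchy sequence:
\[
\int D(\nabla(u_j-u_k),\nabla\overline{(u_j-u_k)})=\ip{u_j-u_k}{S_0(u_j-u_k)}\to0 .
\]
Since $D$ is positive definite, the gradients $D^{1/2}\nabla u_j$ converge in $L^2$. Their limit is $D^{1/2}\nabla u$, by distributional convergence on compact sets. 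Passing to the limit gives \eqref{eq:energy}.

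\emph{Spectral bounds \eqref{eq:spectral}.} Let $\nu_F$ be the scalar spectral measure of $F$ with respect to $S$. Then
\[
\norm v^2=\int\frac{\lambda^2}{(\lambda+\tau)^2}\,d\nu_F\leq\norm F^2,\qquad \tau^2\norm u^2=\int\frac{\tau^2}{(\lambda+\tau)^2}\,d\nu_F\leq\norm F^2,
\]
and
\[
\ip{u}{Su}=\int\frac{\lambda}{(\lambda+\tau)^2}\,d\nu_F\leq\frac{\norm F^2}{4\tau},
\]
the last bound following from $(\lambda+\tau)^2\geq4\lambda\tau$.

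\emph{Convergence as $\tau\to0$.} We have
\[
\norm{v-F}^2=\int\frac{\tau^2}{(\lambda+\tau)^2}\,d\nu_F .
\]
The integrand is bounded by $1$ and tends to $\mathbf 1_{\{0\}}(\lambda)$ as $\tau\to0$. By dominated convergence,
\[
\norm{v-F}^2\to\nu_F(\{0\})=\norm{P_{\ker S}F}^2,
\]
which is $0$ when $\ker S=\{0\}$.
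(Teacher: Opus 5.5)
Your proposal is correct and follows essentially the same route as the paper. It uses the same $\eta_{s,2s}$ cutoff argument with $D\leq Cwg$ to show $\ker(S_0^*+1)=\{0\}$, the same graph-core Cauchy argument for the energy identity, and the same spectral multipliers with dominated convergence. The only difference is that you cite the standard criterion that a nonnegative symmetric operator with $\ker(S_0^*+1)=\{0\}$ is essentially self-adjoint, whereas the paper verifies it directly through closedness and surjectivity of $\operatorname{Ran}(\overline{S_0}+1)$; your constant $4C$ could also be $2C$, which is harmless.
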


\begin{proof}
Put $\mathcal H=L^2(M,w\,dV_g)$. Since $w$ is smooth and positive,
$C_c^\infty(M)$ is dense in $\mathcal H$. Integration by parts gives
\[
 \ip{S_0f}{g}_{\mathcal H}
   =\int_M D(\nabla f,\nabla\bar g)\dd
   =\ip{f}{S_0g}_{\mathcal H},\qquad f,g\in C_c^\infty(M).
\]
Thus $S_0$ is symmetric and nonnegative, and is closable.
The definition of the adjoint shows that
$u\in\Dom(S_0^*)$ and $S_0^*u=f$ are equivalent to the testing
identity in the statement. Indeed, that identity is precisely
$\ip{u}{S_0\phi}_{\mathcal H}=\ip{f}{\phi}_{\mathcal H}$.
All distributions are well defined locally, since $w$ and $w^{-1}$
are bounded on compact sets. This identifies the adjoint domain
with the asserted maximal distributional domain.

\smallskip
\noindent\emph{Essential self-adjointness.}
We first prove $\ker(S_0^*+1)=\{0\}$. If $z$ belongs to this
kernel, the preceding characterization gives
\[
 -\Div(D\nabla z)+wz=0\qquad\text{in }\mathcal D'(M).
\]
The coefficients are smooth and the equation is uniformly elliptic
on each compact subset, so local elliptic regularity gives
$z\in C^\infty(M)$. On each connected component, take
$\chi_r=\eta_{r,2r}$ from \eqref{eq:cutoff}.
Set $\phi=\chi_r^2z$, by Proposition~\ref{prop:compact-approximation}, $\phi$ is an
admissible compactly supported $W^{1,2}$ test function for the
preceding distributional equation $-\Div(D\nabla z)+wz=0$.
Its weak formulation gives
\begin{align*}
 0&=\int_M D(\nabla z,\nabla\bar\phi)\dd
      +\int_M wz\bar\phi\dd\\
 &=\int_M\chi_r^2D(\nabla z,\nabla\bar z)\dd
       +\int_M\chi_r^2w|z|^2\dd\\
  &\quad+2\int_M\chi_r\bar z\,D(\nabla z,\nabla\chi_r)\dd.
\end{align*}
The first two integrals are real, since $D$ is real symmetric
and $w$ and $\chi_r$ are real valued. Moving the cross term to the right yields
\begin{align*}
 &\int_M\chi_r^2D(\nabla z,\nabla\bar z)\dd
     +\int_M\chi_r^2w|z|^2\dd\\
 &\qquad=-2\int_M\chi_r\bar z\,D(\nabla z,\nabla\chi_r)\dd.
\end{align*}
To bound the absolute value of the cross term, Cauchy--Schwarz
for the positive tensor $D$ and $2ab\leq\tfrac12a^2+2b^2$ give
\begin{align*}
 &2\chi_r|z|\,|D(\nabla z,\nabla\chi_r)|\\
 &\quad\leq
 2\bigl(\chi_r D(\nabla z,\nabla\bar z)^{1/2}\bigr)
   \bigl(|z|D(\nabla\chi_r,\nabla\chi_r)^{1/2}\bigr)\\
 &\quad\leq\frac12\chi_r^2D(\nabla z,\nabla\bar z)
       +2|z|^2D(\nabla\chi_r,\nabla\chi_r).
\end{align*}
Finally, $D\leq Cwg$ and \eqref{eq:cutoff} imply
$D(\nabla\chi_r,\nabla\chi_r)\leq Cw|\nabla\chi_r|^2
\leq Cw/r^2$ almost everywhere. Integrating the pointwise
estimate and absorbing half of the gradient integral gives
\begin{align*}
 &\int_M\chi_r^2w|z|^2\dd
    +\frac12\int_M\chi_r^2D(\nabla z,\nabla\bar z)\dd\\
 &\qquad\leq2\int_M|z|^2D(\nabla\chi_r,\nabla\chi_r)\dd
 \leq\frac{2C}{r^2}\norm z_{\mathcal H}^2.
\end{align*}
Since $\chi_r=1$ on $B(o,r)$, we have 
\[0\leq\int_{B(o,r)}|z|^2w\dd\leq\frac{2C}{r^2}\norm z_{\mathcal H}^2.\]
letting $r\to\infty$ proves
$z=0$ on each component.

Now we show that $S_0$ is essentially self-adjoint. Let $A=\overline{S_0}$. Nonnegativity gives
$\norm{(A+1)f}_{\mathcal H}\geq\norm f_{\mathcal H}$ for
$f\in\Dom(A)$. As $A$ is closed, this inequality makes
$\operatorname{Ran}(A+1)$ closed: if $(A+1)f_j$ converges, then
$f_j$ and $Af_j$ are Cauchy, so their limits belong to the graph of
$A$. Moreover,
\[
 \operatorname{Ran}(A+1)^\perp
   =\ker(A^*+1)=\ker(S_0^*+1)=\{0\}.
\]
The range is therefore all of $\mathcal H$. Since $(A+1)$ is surjective, given
$y\in\Dom(A^*)$, we can choose $x\in\Dom(A)$ with
$(A+1)x=(A^*+1)y$. Then $y-x\in\ker(A^*+1)$, so $y=x\in\Dom(A)$.
Thus $A=A^*$ and $S_0$ is essentially self-adjoint. Consequently,
$S=A=S_0^*$ has the stated domain, and $C_c^\infty(M)$ is a graph
core by the definition of the closure.

\smallskip
\noindent\emph{Energy identity on the full domain.}
Fix $u\in\Dom(S)$. Since $C_c^\infty(M)$ is a graph core for
$S$, there are $u_j\in C_c^\infty(M)$ such that
\[
 \norm{u_j-u}_{\mathcal H}\longrightarrow0,
 \qquad
 \norm{S_0u_j-Su}_{\mathcal H}\longrightarrow0.
\]
Using $g$ to identify $D$ with its positive self-adjoint
endomorphism, let $D^{1/2}$ denote its positive square root.
For $d_{j\ell}=u_j-u_\ell$, the compactly supported
integration-by-parts identity gives
\begin{align*}
 \norm{D^{1/2}\nabla d_{j\ell}}_{L^2(dV_g)}^2
 &=\int_M D(\nabla d_{j\ell},\nabla\bar d_{j\ell})\dd\\
 &=\ip{d_{j\ell}}{S_0d_{j\ell}}_{\mathcal H}\\
 &\leq\norm{d_{j\ell}}_{\mathcal H}
          \norm{S_0d_{j\ell}}_{\mathcal H}
 \longrightarrow0.
\end{align*}
Both factors tend to zero by the two graph-norm convergences
above. Thus $D^{1/2}\nabla u_j$ is Cauchy in the space of
square-integrable complex vector fields with measure $dV_g$.
Its completeness gives a vector field $V\in L^2(dV_g)$ such that
\[
 D^{1/2}\nabla u_j\longrightarrow V
 \qquad\text{strongly in }L^2(dV_g).
\]

We identify $V$ by local convergence. Fix a compact set
$K\subset M$. Smoothness and positivity give constants
$\alpha_K,\beta_K>0$ such that
\[
 w\geq\alpha_K,\qquad D\geq\beta_Kg
 \quad\text{on }K.
\]
Set $G=D^{-1/2}V$, which belongs to $L^2_{\mathrm{loc}}$.
The preceding bounds imply
\begin{align*}
 \norm{u_j-u}_{L^2(K,dV_g)}^2
 &\leq\alpha_K^{-1}\norm{u_j-u}_{\mathcal H}^2
 \longrightarrow0,\\
 \norm{\nabla u_j-G}_{L^2(K,dV_g)}^2
 &\leq\beta_K^{-1}
       \norm{D^{1/2}\nabla u_j-V}_{L^2(K,dV_g)}^2
 \longrightarrow0.
\end{align*}
For any real smooth compactly supported vector field $X$,
these local convergences yield
\begin{align*}
 \int_M g(G,X)\dd
 &=\lim_{j\to\infty}\int_M g(\nabla u_j,X)\dd\\
 &=-\lim_{j\to\infty}\int_M u_j\Div X\dd
  =-\int_Mu\Div X\dd.
\end{align*}
This is the distributional identity defining the weak gradient.
Hence $u\in W^{1,2}_{\mathrm{loc}}(M)$ and $G=\nabla u$.
Consequently $V=D^{1/2}\nabla u$, so the global convergence is
\[
 D^{1/2}\nabla u_j\longrightarrow D^{1/2}\nabla u
 \quad\text{in }L^2(dV_g).
\]
In particular, the weighted gradient energy of $u$ is finite.

It remains to pass to the limit on the operator side. The
graph-norm convergences give
\begin{align*}
 &\bigl|\ip{u_j}{S_0u_j}_{\mathcal H}
            -\ip{u}{Su}_{\mathcal H}\bigr|\\
 &\quad\leq\norm{u_j-u}_{\mathcal H}
              \norm{S_0u_j}_{\mathcal H}
       +\norm u_{\mathcal H}
              \norm{S_0u_j-Su}_{\mathcal H}
 \longrightarrow0,
\end{align*}
since $S_0u_j\to Su$ also makes $\norm{S_0u_j}_{\mathcal H}$
bounded. Taking limits in the compactly supported identities
therefore gives
\begin{align*}
 \int_M D(\nabla u,\nabla\bar u)\dd
 &=\norm{D^{1/2}\nabla u}_{L^2(dV_g)}^2\\
 &=\lim_{j\to\infty}
          \norm{D^{1/2}\nabla u_j}_{L^2(dV_g)}^2\\
 &=\lim_{j\to\infty}\ip{u_j}{S_0u_j}_{\mathcal H}
  =\ip{u}{Su}_{\mathcal H},
\end{align*}
which proves \eqref{eq:energy} on all of $\Dom(S)$.

\smallskip
\noindent\emph{Resolvent bounds.}
Since $S$ is nonnegative and self-adjoint, its spectrum is
contained in $[0,\infty)$, and $(S+\tau I)^{-1}$ is defined
on all of $\mathcal H$ for every $\tau>0$.
By the definitions of $u$ and $v$,
\[
 u=(S+\tau I)^{-1}F,\qquad v=Su,\qquad F=v+\tau u.
\]
We now use the spectral theorem to estimate these vectors.
Let $E_S$ and $\nu_F$ be the spectral measure and scalar spectral
measure from Section~\ref{sec:preliminaries}, so that
\[
 \nu_F(B)=\norm{E_S(B)F}_{\mathcal H}^2,\qquad
 \nu_F([0,\infty))=\norm F_{\mathcal H}^2.
\]
The functional-calculus formulas are
\begin{align*}
 u&=\int_{[0,\infty)}\frac{1}{\lambda+\tau}\,dE_S(\lambda)F,\\
 v&=\int_{[0,\infty)}\frac{\lambda}{\lambda+\tau}
                                      \,dE_S(\lambda)F,\\
 \tau u&=\int_{[0,\infty)}\frac{\tau}{\lambda+\tau}
                                      \,dE_S(\lambda)F.
\end{align*}
Here a \emph{spectral multiplier} is the scalar Borel function
$b(\lambda)$ used in the representation
$b(S)=\int b(\lambda)\,dE_S(\lambda)$; the three multipliers
above are explicitly $1/(\lambda+\tau)$,
$\lambda/(\lambda+\tau)$, and $\tau/(\lambda+\tau)$.

For $\lambda\geq0$ and $\tau>0$,
\[
 0\leq\frac{\lambda}{\lambda+\tau}\leq1,
 \qquad
 0\leq\frac{\tau}{\lambda+\tau}\leq1.
\]
Applying the norm formula for spectral integrals gives
\begin{align*}
 \norm v_{\mathcal H}^2
 &=\int_{[0,\infty)}
       \left(\frac{\lambda}{\lambda+\tau}\right)^2
                                      \,d\nu_F(\lambda)\\
 &\leq\int_{[0,\infty)}1\,d\nu_F(\lambda)
   =\norm F_{\mathcal H}^2,\\
 \tau^2\norm u_{\mathcal H}^2
 &=\int_{[0,\infty)}
       \left(\frac{\tau}{\lambda+\tau}\right)^2
                                      \,d\nu_F(\lambda)\\
 &\leq\int_{[0,\infty)}1\,d\nu_F(\lambda)
   =\norm F_{\mathcal H}^2.
\end{align*}
Taking square roots proves the first two bounds in
\eqref{eq:spectral}.

For the energy, the same functional calculus gives
\begin{align*}
 \ip{u}{Su}_{\mathcal H}
 &=\int_{[0,\infty)}
       \frac{1}{\lambda+\tau}\frac{\lambda}{\lambda+\tau}
                                      \,d\nu_F(\lambda)\\
 &=\int_{[0,\infty)}\frac{\lambda}{(\lambda+\tau)^2}
                                      \,d\nu_F(\lambda).
\end{align*}
The scalar estimate follows from
\[ 0\leq\frac{\lambda}{(\lambda+\tau)^2}\leq\frac1{4\tau}.
\]
Consequently,
\[
 \ip{u}{Su}_{\mathcal H}
 \leq\frac1{4\tau}\int_{[0,\infty)}1\,d\nu_F(\lambda)
 =\frac{\norm F_{\mathcal H}^2}{4\tau},
\]
which is the third bound in \eqref{eq:spectral}.
The weighted gradient energy satisfies the same bound by
\eqref{eq:energy}.

To prove the strong convergence, use $F=v+\tau u$ to write
$v-F=-\tau u$.
Hence the same norm formula yields
\[
 \norm{v-F}_{\mathcal H}^2
 =\int_{[0,\infty)}
       \left(\frac{\tau}{\lambda+\tau}\right)^2
                                      \,d\nu_F(\lambda).
\]
As $\tau\to0$, the integrand satisfies
\[
 \left(\frac{\tau}{\lambda+\tau}\right)^2
 \longrightarrow
 \begin{cases}
  1,&\lambda=0,\\
  0,&\lambda>0,
 \end{cases}
 \qquad
 0\leq\left(\frac{\tau}{\lambda+\tau}\right)^2\leq1.
\]
Since $\nu_F$ is finite, dominated convergence therefore gives
\begin{align*}
 \lim_{\tau\downarrow0}\norm{v-F}_{\mathcal H}^2
 &=\nu_F(\{0\})\\
 &=\norm{E_S(\{0\})F}_{\mathcal H}^2
  =\norm{P_{\ker S}F}_{\mathcal H}^2.
\end{align*}
If $\ker S=\{0\}$, this limit is zero, which proves
$v\to F$ strongly in $\mathcal H$.
Finally, for smooth $F$, local elliptic regularity applied to
$-\Div(D\nabla u)+\tau wu=wF$ gives $u\in C^\infty(M)$.
This local assertion does not itself supply a global weighted
derivative estimate.
\end{proof}

Apply the lemma with $w=1$ and $D=g$ to obtain the nonnegative
self-adjoint realization of $L=-\Delta$. Its kernel is zero here:
\eqref{eq:energy} makes any kernel element constant, and a nonzero
constant is not square integrable on our connected infinite-volume
manifold. For general elements of an operator domain, derivatives
such as $\nabla^2u$ and $dd^cu$ are initially distributional.
In the identities below they denote the $L^2$ representatives
obtained by graph-core approximation, in the stated measure and
weight. Smoothness is asserted separately for resolvents with smooth
sources, by the local regularity argument in
Lemma~\ref{lem:operator}.

The following identities give the
K\"ahler improvement over the ordinary Hessian estimate.

\begin{lemma}\label{lem:kahler-energy}
For every real $u\in\Dom(L)$,
\begin{align}
 \int_M|H^+u|^2\dd&=\frac12\norm{Lu}_2^2,
 \int_M|H^+_0u|^2\dd=c_m\norm{Lu}_2^2,\label{eq:kahler-energy}\\
 &\int_M|H^-u|^2\dd+\int_M\Ric(\nabla u,\nabla u)\dd
     =\frac12\norm{Lu}_2^2.\label{eq:ricci-energy}
\end{align}
\end{lemma}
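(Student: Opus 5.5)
The plan is to prove all three identities first for real $u\in C_c^\infty(M)$ by pointwise Kähler identities plus integration by parts. We then pass to general $u\in\Dom(L)$ by the graph-core approximation of Lemma~\ref{lem:operator} with $w=1$, $D=g$.

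\emph{Step 1: the $dd^c$ identity.} For compactly supported smooth real $u$, use \eqref{eq:ddc-hessian}:
\[
 \int_M|H^+u|^2\dd=\tfrac12\int_M|dd^cu|^2\dd .
\]
Since $dd^cu$ is a closed real $(1,1)$-form, we have the pointwise identity
\[
 |dd^cu|^2\omega^m/m! = (\Lambda_\omega dd^cu)^2\,\omega^m/m! - dd^cu\wedge dd^cu\wedge\omega^{m-2}/(m-2)! .
\]
This is the standard primitive decomposition for $(1,1)$-forms; it can be checked in a frame diagonalizing the form and using the sign conventions fixed in \eqref{eq:conventions}. The last term is $d\bigl(d^cu\wedge dd^cu\wedge\omega^{m-2}\bigr)/(m-2)!$, which is exact with compact support and integrates to zero. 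Hence $\int|dd^cu|^2=\int(\Delta u)^2$, which gives the first identity $\int|H^+u|^2=\tfrac12\norm{Lu}_2^2$.

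\emph{Step 2: the trace-free part.} Orthogonality of $H^+_0u$ and $\frac{\Delta u}{2m}g$ gives
\[
 |H^+_0u|^2=|H^+u|^2-\frac{(\Delta u)^2}{4m^2}|g|^2=|H^+u|^2-\frac{(\Delta u)^2}{2m},
\]
using $\operatorname{tr}H^+u=\Delta u$ and $|g|^2=2m$. Integrating yields $(\tfrac12-\tfrac1{2m})\norm{Lu}_2^2=c_m\norm{Lu}_2^2$.

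\emph{Step 3: the Ricci identity.} For \eqref{eq:ricci-energy} apply the integrated Bochner formula to compactly supported $u$:
\[
 \int|\nabla^2u|^2+\int\Ric(\nabla u,\nabla u)=\int(\Delta u)^2 .
\]
By orthogonality, $|\nabla^2u|^2=|H^+u|^2+|H^-u|^2$. Subtracting Step 1 leaves $\int|H^-u|^2+\int\Ric(\nabla u,\nabla u)=\tfrac12\norm{Lu}_2^2$.

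\emph{Step 4: extension to $\Dom(L)$.} This is the main obstacle. Take $u_j\in C_c^\infty$ with $u_j\to u$ and $Lu_j\to Lu$ in $L^2$; real parts may be taken since $L$ is real. Applying the identities to $u_j-u_k$ shows that $H^\pm u_j$ are Cauchy in $L^2$, because $\Ric\geq0$ makes both terms in \eqref{eq:ricci-energy} nonnegative. Then $\nabla^2u_j$ is Cauchy in $L^2$, and local convergence identifies the limit with the distributional Hessian of $u$, exactly as $G=\nabla u$ was identified in Lemma~\ref{lem:operator}. Thus $H^\pm u$, $H^+_0u$ are the $L^2$ limits. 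For the Ricci term, $\int\Ric(\nabla(u_j-u_k),\nabla(u_j-u_k))\to0$, and $\nabla u_j\to\nabla u$ in $L^2_{\rm loc}$ by \eqref{eq:energy}. Fatou's lemma together with the Cauchy property then shows $\Ric^{1/2}\nabla u_j\to\Ric^{1/2}\nabla u$ in $L^2$, so the Ricci integral passes to the limit. All identities then follow by taking limits.
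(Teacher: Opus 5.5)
Your proposal is correct and follows essentially the same route as the paper. The steps match: the exact form $d\bigl(d^cu\wedge dd^cu\wedge\omega^{m-2}\bigr)/(m-2)!$ with the diagonal-frame identity, then subtracting the trace part, then the integrated Bochner formula, and finally graph-core approximation in which $\Ric\geq0$ makes both the Hessians and the Ricci-weighted gradients Cauchy. Your Fatou argument for the Ricci term, taken along an a.e.\ convergent subsequence, is only a minor variant of the paper's step: there, the $L^2$ limit of $\Ric^{1/2}\nabla u_j$ is identified locally using the local boundedness of $\Ric$.
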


\begin{proof}
For $u\in C_c^\infty(M)$, put $A=dd^cu$. The identities
$dA=d^2(d^cu)=0$ and $d\omega=0$ show that
$A\wedge\omega^{m-2}$ is closed. Hence
\[
 \frac{A\wedge A\wedge\omega^{m-2}}{(m-2)!}
   =d\left(\frac{d^cu\wedge A\wedge\omega^{m-2}}{(m-2)!}\right).
\]
The form inside the exterior derivative has compact support,
so Stokes' theorem makes the integral zero.
Since $u$ is real, $A$ is a real
$(1,1)$-form. Choose a real orthonormal frame
$(e_i,Je_i)_{i=1}^m$ in which $A$ is diagonal. If
$(\theta_i,\psi_i)$ is the dual coframe and
$\alpha_i=\theta_i\wedge\psi_i$, then
\[
 A=\sum_{i=1}^m a_i\alpha_i,\qquad
 \omega=\sum_{i=1}^m\alpha_i,\qquad
 dV_g=\alpha_1\wedge\cdots\wedge\alpha_m.
\]
The forms $\alpha_i$ satisfy
$\alpha_i\wedge\alpha_i=0$ and
$\alpha_i\wedge\alpha_j=\alpha_j\wedge\alpha_i$. Hence
\[
 A\wedge A=2\sum_{i<j}a_ia_j\,\alpha_i\wedge\alpha_j,
 \qquad
 \frac{\omega^{m-2}}{(m-2)!}
 =\sum_{\substack{I\subset\{1,\ldots,m\}\\|I|=m-2}}\alpha_I,
\]
where $\alpha_I=\bigwedge_{i\in I}\alpha_i$, with the empty
product interpreted as $1$. For each pair $i<j$, only the
complementary index set
$I=\{1,\ldots,m\}\setminus\{i,j\}$ contributes to the wedge
product. Therefore
\[
\begin{aligned}
 \frac{A\wedge A\wedge\omega^{m-2}}{(m-2)!}
 &=2\sum_{i<j}a_ia_j\,dV_g=\left[
       \left(\sum_{i=1}^m a_i\right)^2
       -\sum_{i=1}^m a_i^2
   \right]dV_g.
\end{aligned}
\]

The forms $\alpha_i$ are orthonormal, so
$|A|^2=\sum_i a_i^2$. Moreover, the identity
$A(X,JY)=2H^+u(X,Y)$ shows that the eigenvalues of $H^+u$
are $a_i/2$, each with multiplicity two. Consequently,
\[
\begin{aligned}
 \Delta u
 &=\operatorname{tr}_g H^+u
   =2\sum_{i=1}^m\frac{a_i}{2}
   =\sum_{i=1}^m a_i,\\
 |H^+u|^2
 &=2\sum_{i=1}^m\left(\frac{a_i}{2}\right)^2
   =\frac12\sum_{i=1}^m a_i^2
   =\frac12|dd^cu|^2.
\end{aligned}
\]
Integrating therefore gives
$\int_M|H^+u|^2\dd=\frac12\norm{Lu}_2^2$.
Since
$|H^+_0u|^2=|H^+u|^2-(\Delta u)^2/(2m)$,
subtracting the trace part gives \eqref{eq:kahler-energy}.
The integrated Bochner formula reads
\[
 \int_M|\nabla^2u|^2\dd+
 \int_M\Ric(\nabla u,\nabla u)\dd=\norm{Lu}_2^2.
\]
Orthogonality of $H^+u$ and $H^-u$ now gives
\eqref{eq:ricci-energy}.

For general $u\in\Dom(L)$, choose real
$u_j\in C_c^\infty(M)$ with $u_j\to u$ and $Lu_j\to Lu$
in $L^2(dV_g)$. Apply the identities and Bochner's formula to
$u_j-u_\ell$. Since $\Ric\geq0$,
\[
 \|\nabla^2(u_j-u_\ell)\|_2^2
 +\int_M\Ric(\nabla(u_j-u_\ell),\nabla(u_j-u_\ell))\dd
 =\|L(u_j-u_\ell)\|_2^2\longrightarrow0.
\]
The Hessians therefore converge in $L^2(dV_g)$, and their limit
is $\nabla^2u$ by local distributional convergence. The gradients
converge in $L^2(dV_g)$ by \eqref{eq:energy}; the Ricci-weighted
limit is identified locally with the same $\nabla u$, since the
smooth nonnegative Ricci tensor is bounded on compact sets.
Thus all displayed identities pass to the limit with weak
Hessians, without any curvature upper bound. For the smooth
sources used later, local elliptic regularity additionally makes
the corresponding resolvent solutions smooth.
\end{proof}

\subsection*{The localized quadratic estimate}

The first curvature estimate uses only the ordinary Laplacian.
It is a localization of the contracted-Bianchi and Hessian pairing
that also underlies almost-Schur inequalities; compare
\cite{DeLellisTopping}. The compactly supported source is important
because no global curvature integral has yet been established.

\begin{proposition}\label{prop:quadratic}
Let $(M^{2m},g,J)$ be the K\"ahler manifold in Theorem \ref{thm:main}.
Then, for every real $\eta\in C_c^\infty(M)$,
\begin{equation}\label{eq:quadratic}
 \int_M\eta^2R^2\dd
 \leq\frac{(1/2-\varepsilon)^2}
               {2(m-1)^2\varepsilon^3}
               \int_M R|\nabla\eta|^2\dd.
\end{equation}
\end{proposition}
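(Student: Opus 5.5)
The plan is to test the contracted Bianchi identity $\Div\Ric=\tfrac12 dR$ against the Hessian of a resolvent solution of the ordinary Laplacian. The source is chosen so that, in the limit $\tau\to0$, the pairing reproduces $\int_M\eta^2R^2\dd$.

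\emph{Setup.} Fix real $\eta\in C_c^\infty(M)$ and put $F=\eta R\in C_c^\infty(M)$. For $\tau>0$ let $u=(L+\tau)^{-1}F$, which is smooth and lies in $\Dom(L)$ by Lemma~\ref{lem:operator} with $w=1$, $D=g$. Since $\ker L=\{0\}$, $Lu\to F$ in $L^2$ as $\tau\to0$. So it suffices to bound $\int_M\eta R\,Lu\dd$ uniformly in $\tau$ and then let $\tau\to0$.

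\emph{Decomposition of the pairing.} Since $\Ric$ is $J$-invariant, it pairs only with $H^+u$, and pointwise
\[
 \ip{\Ric}{\nabla^2u}=\ip{\Ric_0}{H^+_0u}+\frac{R\,\Delta u}{2m},
 \qquad \Ric_0=\Ric-\frac{R}{2m}g .
\]
Integrating $\eta\ip{\Ric}{\nabla^2u}$ by parts and using $\Div\Ric=\tfrac12dR$ gives
\[
 \int_M\eta\ip{\Ric}{\nabla^2u}\dd
 =\frac12\int_M\eta R\,\Delta u\dd+\frac12\int_M R\,\ip{\nabla\eta}{\nabla u}\dd-\int_M\Ric(\nabla\eta,\nabla u)\dd .
\]
Only $\eta$ is differentiated in the boundary terms, and $\eta$ has compact support, so no global integrability of $R$ is needed. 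Comparing with the trace decomposition gives
\[
 c_m\int_M\eta R\,Lu\dd=-\int_M\eta\ip{\Ric_0}{H^+_0u}\dd+\int_M T(\nabla\eta,\nabla u)\dd,
 \qquad T=\tfrac12Rg-\Ric .
\]

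\emph{Pinching bounds.} From \eqref{eq:eigenvalues} and $\lambda_i\geq\varepsilon R$, each eigenvalue of $T$ lies in $[0,(\tfrac12-\varepsilon)R]$. Each eigenvalue of $\Ric_0$ has absolute value at most $(\tfrac12-\varepsilon-\tfrac1{2m})R$. Both are therefore $O(R)$ with explicit constants.
\begin{itemize}
\item The $H^+_0$ term is controlled by Cauchy--Schwarz against $\int_M\eta^2R^2\dd$ together with Lemma~\ref{lem:kahler-energy}, which gives $\int_M|H_0^+u|^2\dd=c_m\|Lu\|_2^2$.
\item The gradient term is controlled by $\int_M R|\nabla\eta|^2\dd$ times $\int_M R|\nabla u|^2\dd$. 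From \eqref{eq:ricci-energy} and $\Ric\geq\varepsilon Rg$,
\[
 \varepsilon\int_M R|\nabla u|^2\dd\leq\int_M\Ric(\nabla u,\nabla u)\dd\leq\tfrac12\|Lu\|_2^2 .
\]
\end{itemize}
Let $\tau\to0$, so that $Lu\to F$ and $\|Lu\|_2\to\|\eta R\|_2$. The left side becomes $c_m\|\eta R\|_2^2$, and the right side is at most $a\|\eta R\|_2^2+b\|\eta R\|_2\bigl(\int_M R|\nabla\eta|^2\dd\bigr)^{1/2}$, with $a$ and $b$ given by the pinching constants. Dividing by $\|\eta R\|_2$ yields \eqref{eq:quadratic}, provided $a<c_m$.

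\emph{Main obstacle: the absorption.} The estimate for the $\Ric_0$ term must come out strictly smaller than $c_m$. A naive operator-norm bound on $\Ric_0$ combined with $\|H^+_0u\|_2=\sqrt{c_m}\|Lu\|_2$ need not give this, since the crude constant $\sqrt{c_m}(\tfrac12-\varepsilon-\tfrac1{2m})\sqrt{2m}$ can exceed $c_m$. My first attempt would be to avoid this by not splitting off the trace. Instead I would exploit $\Ric\geq\varepsilon Rg\geq0$ through a weighted Cauchy--Schwarz for the positive tensor $\Ric$, combined with the Ricci energy identity. Concretely, rewrite the bulk pairing so that the only term not involving $\nabla\eta$ is absorbed with a margin proportional to $\varepsilon$, using $\int_M\Ric(\nabla u,\nabla u)\dd\leq\tfrac12\|Lu\|_2^2$.

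The constant in \eqref{eq:quadratic} has the form $(\tfrac12-\varepsilon)^2/(2(m-1)^2\varepsilon^3)$. This suggests the intended route: the gradient term carries $(\tfrac12-\varepsilon)^2$ from $|T|_{\mathrm{op}}^2$ and $1/\varepsilon$ from the Ricci energy, and an absorption margin of order $(m-1)\varepsilon$ enters squared. I would aim to reproduce exactly this bookkeeping, with a Young's inequality parameter chosen to optimize it.
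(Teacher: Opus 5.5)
\textbf{The gap is at the absorption step.} Your setup matches the paper's proof. You use the same resolvent $u=(L+\tau)^{-1}(\eta R)$ and the same Bianchi pairing giving $c_m\int_M\eta R\,Lu\dd=-\int_M\eta\ip{\Ric_0}{H^+_0u}\dd+\int_M T(\nabla\eta,\nabla u)\dd$. You also treat the gradient term the same way, via $|T|_{\mathrm{op}}\le(\tfrac12-\varepsilon)R$ and the Ricci energy identity. But the step you flag as the main obstacle is left open, and your sketched remedy cannot close it. The bulk term $\int_M\eta\ip{\Ric_0}{H^+_0u}\dd$ pairs the curvature with second derivatives of $u$. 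The quantity $\int_M\Ric(\nabla u,\nabla u)\dd$ controls only first derivatives, so no weighted Cauchy--Schwarz with respect to $\Ric$ can absorb that term. It has to be controlled by $\norm{H^+_0u}_2=\sqrt{c_m}\norm{Lu}_2$ together with a pointwise bound on the full (Hilbert--Schmidt) norm $|\Ric_0|$.

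\textbf{The missing idea.} Under pinching, $|\Ric_0|$ is much smaller than $\sqrt{2m}\,|\Ric_0|_{\mathrm{op}}$. Put $\mu_i=\lambda_i-\varepsilon R\ge0$, so $\sum_i\mu_i=qR/2$ and hence $\sum_i\mu_i^2\le(\sum_i\mu_i)^2$. The eigenvalues of $\Ric_0$ are $\mu_i-qR/(2m)$, each with multiplicity two, so
\[
 |\Ric_0|^2=2\Bigl(\sum_i\mu_i^2-\frac{q^2R^2}{4m}\Bigr)\le c_mq^2R^2 .
\]
This gives $\bigl|\int_M\eta\ip{\Ric_0}{H^+_0u}\dd\bigr|\le c_mq\norm{\eta R}_2\norm{Lu}_2$. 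So your $a$ is $c_mq$, and the margin is $c_m(1-q)=(m-1)\varepsilon$, exactly the quantity you inferred from the constant. Letting $\tau\to0$ and dividing by $\norm{\eta R}_2$ (no Young's inequality is needed) gives $\norm{\eta R}_2\le\frac{1/2-\varepsilon}{\sqrt{2\varepsilon}\,(m-1)\varepsilon}\bigl(\int_M R|\nabla\eta|^2\dd\bigr)^{1/2}$. Squaring yields the stated constant.

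\textbf{Why the alternatives fall short.} Even the sharp operator bound $|\Ric_0|_{\mathrm{op}}\le c_mqR$ absorbs only if $(m-1)q^2<1$, which fails for $m\ge3$ and small $\varepsilon$. Not splitting off the trace, i.e.\ bounding $\ip{\Ric}{H^+u}$ by $|\Ric|\,|H^+u|$, gives a gap of $\tfrac12-\sqrt{\tfrac14-(m-1)\varepsilon(1-m\varepsilon)}$. This is positive but strictly smaller than $(m-1)\varepsilon$, so it would not reproduce the stated constant.
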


\begin{proof}
Set $E_m=\Ric-Rg/(2m)$ and $T=\Ric-Rg/2$.
The contracted Bianchi identity $\Div\Ric=\frac12\,dR$ gives
$\Div E_m=c_m\,dR$. Applying the product rule, and using
$E_m-c_mRg=T$, we obtain
\begin{equation}\label{eq:bianchi-source}
 \Div(\eta E_m)=c_m\,d(\eta R)+T(\nabla\eta,\cdot).
\end{equation}
We first record the pointwise bounds
\begin{equation}\label{eq:ordinary-algebra}
 |E_m|^2\leq c_mq^2R^2,\qquad
 |T|_{\mathrm{op}}\leq(1/2-\varepsilon)R.
\end{equation}
Indeed, put $\mu_i=\lambda_i-\varepsilon R$ in the paired
Ricci eigenframe \eqref{eq:eigenvalues}. Then
$\mu_i\geq0$ and $\sum_i\mu_i=qR/2$, so
$\sum_i\mu_i^2\leq(\sum_i\mu_i)^2$. Since each Ricci eigenvalue
occurs twice in the real frame,
\[
 |E_m|^2=2\left(\sum_i\mu_i^2
                      -\frac{q^2R^2}{4m}\right)
 \leq\frac{m-1}{2m}q^2R^2.
\]
The eigenvalues of $T^\sharp$ are $\lambda_i-R/2$, again
with multiplicity two. The bounds
$\varepsilon R\leq\lambda_i\leq R/2$ place them in
$[-(1/2-\varepsilon)R,0]$, proving the operator-norm bound.

Set
\[
 \begin{gathered}
 F=\eta R,\qquad X=\norm F_2,\qquad
 G=\left(\int_M R|\nabla\eta|^2\dd\right)^{1/2},\\
 u_\tau=(L+\tau)^{-1}F,\qquad v_\tau=Lu_\tau.
 \end{gathered}
\]
The source $F$ is real, smooth, and compactly supported, so
$F\in L^2(M)$ without any global integrability assumption on $R$.
The resolvent solution is real and smooth, and
Lemma~\ref{lem:operator} gives $\norm{v_\tau}_2\leq X$.
Pairing \eqref{eq:bianchi-source} with $\nabla u_\tau$
\[
 \int_M
 \bigl\langle\Div(\eta E_m),\nabla u_\tau\bigr\rangle\,dV_g
 =
 c_m\int_M\langle\nabla F,\nabla u_\tau\rangle\,dV_g
 +\int_M T(\nabla\eta,\nabla u_\tau)\,dV_g.
\]
Integrating by parts gives
\[
 c_m\ip F{v_\tau}_{L^2(\dd)}
 =-\int_M\eta\,\ip{E_m}{\nabla^2 u_\tau}\dd
  -\int_M T(\nabla\eta,\nabla u_\tau)\dd.
\]
The K\"ahler condition implies
$E_m(JX,JY)=E_m(X,Y)$, and the definition of $E_m$ gives
$\operatorname{tr}_gE_m=0$. At any point, choose a real
orthonormal frame $(e_a)_{a=1}^{2m}$. Using the $J$-invariance
of $E_m$, we obtain
\[
 \begin{aligned}
 \langle E_m,\nabla^2u_\tau\rangle
 &=
 \frac12\sum_{a,b=1}^{2m}E_m(e_a,e_b)
 \bigl(
   \nabla^2u_\tau(e_a,e_b)
   +\nabla^2u_\tau(Je_a,Je_b)
 \bigr)\\
 &=\langle E_m,H^+u_\tau\rangle\\
 &=\langle E_m,H^+_0u_\tau\rangle
   +\frac{\Delta u_\tau}{2m}\operatorname{tr}_gE_m\\
 &=\langle E_m,H^+_0u_\tau\rangle.
 \end{aligned}
\]
Substituting this into the preceding identity gives
\[
 c_m\langle F,v_\tau\rangle_{L^2(\dd)}
 =
 -\int_M\eta\,\langle E_m,H^+_0u_\tau\rangle\,dV_g
 -\int_M T(\nabla\eta,\nabla u_\tau)\,dV_g.
\]
The integration by parts involves
compactly supported tensors; the global Hessian and energy norms
used below are supplied by Lemma~\ref{lem:kahler-energy}.

For the first term, Cauchy--Schwarz, \eqref{eq:ordinary-algebra}, \eqref{eq:kahler-energy} and \eqref{eq:spectral} yield
\[
 \begin{aligned}
 \left|\int_M\eta\,\ip{E_m}{H^+_0u_\tau}\dd\right|
 &\leq \sqrt{c_m}\,qX\,\norm{H^+_0u_\tau}_2\\
 &=c_mqX\norm{v_\tau}_2
 \leq c_mqX^2.
 \end{aligned}
\]
For the second term, the operator-norm bound gives
$|T(\nabla\eta,\nabla u_\tau)|
\leq(1/2-\varepsilon)R|\nabla\eta||\nabla u_\tau|$.
Moreover, pinching and\eqref{eq:spectral}, \eqref{eq:ricci-energy} imply
\[
 \varepsilon\int_M R|\nabla u_\tau|^2\dd
 \leq\int_M\Ric(\nabla u_\tau,\nabla u_\tau)\dd
 \leq\frac12\norm{v_\tau}_2^2
 \leq\frac12X^2.
\]
Thus weighted Cauchy--Schwarz gives the bound, uniform in $\tau$,
\begin{align*}
 \left|\int_M T(\nabla\eta,\nabla u_\tau)\dd\right|
 &\leq(1/2-\varepsilon)G
       \left(\int_M R|\nabla u_\tau|^2\dd\right)^{1/2}\\
 &\leq\frac{1/2-\varepsilon}{\sqrt{2\varepsilon}}
       XG.
\end{align*}
Combining the two estimates, we have
\[
 c_m\ip F{v_\tau}_{L^2(\dd)}\leq c_mqX^2
       +\frac{1/2-\varepsilon}{\sqrt{2\varepsilon}}XG.
\]
Since $\ker L=\{0\}$ on our connected infinite-volume manifold,
Lemma~\ref{lem:operator} gives $v_\tau\to F$ strongly in $L^2$.
Letting $\tau\downarrow0$ therefore yields
\[
 c_m(1-q)X^2
 \leq\frac{1/2-\varepsilon}{\sqrt{2\varepsilon}}XG.
\]
If $X=0$, the conclusion is immediate. Otherwise divide by $X$
and use $c_m(1-q)=(m-1)\varepsilon>0$ to obtain
\[
 X\leq
 \frac{1/2-\varepsilon}{\sqrt{2}\,(m-1)\varepsilon^{3/2}}G.
\]
Squaring is exactly \eqref{eq:quadratic}.
\end{proof}

The quadratic estimate already can settles the critical surface case.
We give the ball iteration and the final annular argument explicitly.

\begin{corollary}[Ricci pinching on K\"ahler surfaces]\label{cor:kahler-surface}
Let $(M^4,g,J)$ be a connected complete K\"ahler surface satisfying
\eqref{eq:pinching} for a fixed $\varepsilon>0$. Then $M$ is compact
or Ricci-flat.
\end{corollary}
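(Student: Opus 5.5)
Assume $M$ is noncompact; we show $R\equiv0$, and then $\Ric\geq\varepsilon Rg\geq0$ together with $\operatorname{tr}_g\Ric=R$ forces $\Ric\equiv0$. Here $m=2$. The only curvature input is Proposition~\ref{prop:quadratic}, which holds for compactly supported Lipschitz cutoffs by the approximation of Proposition~\ref{prop:compact-approximation}:
\[
 \int_M\eta^2R^2\dd\leq C\int_M R|\nabla\eta|^2\dd,
\]
with $C=C(\varepsilon)$. The plan has two stages. First, iterate this inside a fixed outer ball to prove $R\in L^2(M)$. Second, apply it with annular cutoffs to show that the $L^2$ norm vanishes.

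\emph{Step 1 (global integrability).} Fix $r\geq1$ and write $A(s)=\int_{B(o,s)}R^2\dd$. For $r\leq s<t\leq 2r$, insert $\eta_{s,t}$ from \eqref{eq:cutoff} and apply Cauchy--Schwarz on the annulus $B(o,t)\setminus B(o,s)$. Using \eqref{eq:volume} with $2m=4$, this gives
\[
 A(s)\leq \frac{C}{(t-s)^2}\int_{B(o,t)}R\dd
 \leq \frac{C}{(t-s)^2}A(t)^{1/2}V(t)^{1/2}
 \leq \frac{C r^2}{(t-s)^2}A(t)^{1/2}.
\]
Since $r^2/(t-s)^2$ is scale invariant, Young's inequality gives $A(s)\leq \frac12A(t)+C r^4/(t-s)^4$. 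The standard iteration lemma for such inequalities on $[r,2r]$ uses that $A$ is finite on bounded sets, since $R$ is smooth. It yields $A(r)\leq C r^4/r^4=C$ with $C$ independent of $r$. Letting $r\to\infty$ gives $\int_M R^2\dd\leq C<\infty$. This is exactly the critical-dimension feature: in complex dimension two the volume growth $r^4$ exactly cancels $r^{-4}$.

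\emph{Step 2 (vanishing).} Now use $\eta=\eta_{r,2r}$ again, but bound the right side only on the annulus $A_r=B(o,2r)\setminus B(o,r)$:
\[
 \int_{B(o,r)}R^2\dd\leq \frac{C}{r^2}\int_{A_r}R\dd
 \leq \frac{C}{r^2}\Bigl(\int_{A_r}R^2\dd\Bigr)^{1/2}V(2r)^{1/2}
 \leq C\Bigl(\int_{A_r}R^2\dd\Bigr)^{1/2}.
\]
By Step~1, $R\in L^2$, so the annular tail integral tends to $0$ as $r\to\infty$. Hence $\int_M R^2\dd=0$, so $R\equiv0$, and the pinching makes $\Ric\equiv0$.

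The main obstacle is Step~1, because the estimate has the lower power $R$ on the right. One must avoid assuming $R\in L^2$ a priori, and one must make the absorption constant uniform in $r$. I would handle this with the explicit Giaquinta-type iteration along radii $s_{k}=r(2-2^{-k})$, which geometrically sums the factors $\bigl(2^{-k}\bigr)^{-4}(1/2)^k$ after replacing $\frac12$ by a smaller absorption constant such as $\frac1{32}$. The compactly supported Lipschitz cutoffs are admissible by Proposition~\ref{prop:compact-approximation}.
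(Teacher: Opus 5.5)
Your proposal is correct and follows the paper's proof essentially step for step. It uses the same ball iteration along $s_j=2r-r2^{-j}$, with an absorption constant below $1/16$, to get $\int_{B(o,r)}R^2\dd\leq C$ uniformly in $r$, and then the same annular Cauchy--Schwarz argument with $V(2r)\leq Cr^4$ to force $\int_M R^2\dd=0$. The only cosmetic difference is that you insert the volume bound $V(t)\leq Cr^4$ before applying Young's inequality rather than after.
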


\begin{proof}
Assume that $M$ is noncompact. Tracing \eqref{eq:pinching}
settles $\varepsilon>1/4$, so we may suppose
$0<\varepsilon\leq1/4$. Put
$I_2(s)=\int_{B(o,s)}R^2\dd$ and $V(s)=\operatorname{Vol}B(o,s)$.
These integrals are finite for finite $s$, by completeness and
smoothness of $R$.

For $0<s<t$, apply Proposition~\ref{prop:quadratic} to
$\eta_{s,t}$ from \eqref{eq:cutoff}. Its admissibility follows
from Proposition~\ref{prop:compact-approximation}.
The cutoff properties and Cauchy--Schwarz give
\[
 I_2(s)\leq\frac{C}{(t-s)^2}\int_{B(o,t)}R\dd
       \leq\frac{C}{(t-s)^2}I_2(t)^{1/2}V(t)^{1/2}.
\]
Young's inequality then gives, for every $\theta>0$,
\[
 I_2(s)\leq\theta I_2(t)+C_\theta(t-s)^{-4}V(t).
\]
Fix $r>0$, choose $0<\theta<1/16$, and set
$s_j=2r-r2^{-j}$ for $j\geq0$. Then $s_0=r$, $s_j<2r$,
and $s_{j+1}-s_j=r2^{-j-1}$. Repeated substitution, with the
fixed factor $16$ absorbed into $C_\theta$, gives
\[
 I_2(r)\leq\theta^NI_2(2r)
       +C_\theta r^{-4}V(2r)
           \sum_{j=0}^{N-1}(16\theta)^j.
\]
For this fixed $r$, the outer-ball integral $I_2(2r)$ is finite.
Hence the first term tends to zero as $N\to\infty$, and the
series converges because $16\theta<1$. Using
\eqref{eq:volume} in real dimension four, we obtain
\begin{equation}\label{eq:surface-ball-estimate}
 I_2(r)\leq Cr^{-4}V(2r)\leq C.
\end{equation}
The constant is independent of $r$. Monotone convergence as
$r\to\infty$ now proves $\int_MR^2\dd<\infty$.
Thus global integrability is a consequence of the local estimate.

Apply Proposition~\ref{prop:quadratic} once more with
$\eta=\eta_{r,2r}$, keeping the gradient supported in the annulus
$A_r=\overline{B(o,2r)}\setminus B(o,r)$. Since
$|\nabla\eta|\leq r^{-1}$ and $V(2r)\leq Cr^4$,
\begin{equation}\label{eq:surface-annular-estimate}
 \begin{aligned}
 I_2(r)&\leq Cr^{-2}\int_{A_r}R\dd\\
 &\leq Cr^{-2}
       \left(\int_{A_r}R^2\dd\right)^{1/2}V(2r)^{1/2}
 \leq C\left(\int_{A_r}R^2\dd\right)^{1/2}.
 \end{aligned}
\end{equation}
The established global integrability gives
\[
 0\leq\int_{A_r}R^2\dd
 \leq\int_{M\setminus B(o,r)}R^2\dd\longrightarrow0.
\]
Letting $r\to\infty$ in \eqref{eq:surface-annular-estimate}
therefore yields $\int_MR^2\dd=0$. Smoothness implies
$R\equiv0$, and the nonnegative Ricci tensor, having zero trace,
vanishes identically.
\end{proof}

\begin{rem}\label{rem:riemannian-four}
Corollary \ref{cor:kahler-surface} is also valid for $4$-dimensional manifolds with pinching condition if $\varepsilon>1/6$. Indeed,  let $(M^4,g)$ be a connected complete Riemannian manifold satisfying $\Ric\geq\varepsilon Rg\geq0$ for a fixed
$\varepsilon>1/6$. Then $M$ is compact or Ricci-flat.
\end{rem}
Assume that $M$ is noncompact with pinching constant $\varepsilon$ satisfying $1/6<\varepsilon\leq1/4$.
Put $E=\Ric-Rg/4$ and $q=1-4\varepsilon$.
The Ricci eigenvalue bound and the ordinary Bochner formula give
\[
 |E|^2\leq\frac34q^2R^2,\qquad
 \norm{(\nabla^2u)_0}_2^2\leq\frac34\norm{Lu}_2^2,
\]
where $(\nabla^2u)_0=\nabla^2u-(\Delta u)g/4$.
Repeating the proof of Proposition~\ref{prop:quadratic},
with $X=\norm{\eta R}_2$ and
$G^2=\int_M R|\nabla\eta|^2\dd$, yields
\[
 \left(3\varepsilon-\frac12\right)X^2
 \leq\frac{1/2-\varepsilon}{\sqrt{\varepsilon}}XG.
\]
Thus $\int_M\eta^2R^2\dd\leq C_\varepsilon
\int_M R|\nabla\eta|^2\dd$, and the ball iteration and annular
argument in Corollary~\ref{cor:kahler-surface} imply $R\equiv0$.
The restriction $\varepsilon>1/6$ comes from the positivity of
$3\varepsilon-1/2$; the present argument does not give strict
absorption for $0<\varepsilon\leq1/6$.
On a K\"ahler surface, the paired Ricci eigenvalues and the identity
$\norm{H^+_0u}_2^2=\frac14\norm{Lu}_2^2$ improve the interior
coefficient from $3q/4$ to $q/4$. The absorption gap is then
$(1-q)/4=\varepsilon>0$, which is the reason why the K\"ahler conclusion
holds for every positive pinching constant.

\section{Mixed Ricci forms and higher-power curvature estimates}\label{sec:mixed-ricci}

For complete K\"ahler manifolds of complex dimension $m\geq3$,
the quadratic estimate for the scalar curvature alone does not
suffice to complete our Ricci pinching argument. We therefore
study mixed Ricci forms and derive higher-power integral estimates
for the scalar curvature.

\subsection*{Regularized mixed Ricci forms}

Fix $1\leq k\leq m-2$ and $\delta>0$, and define
\begin{equation}\label{eq:mixed-definitions}
 \Theta_{k,\delta}
 =\frac{(\rho+\delta\omega)^k\wedge\omega^{m-k-2}}{(m-k-2)!},
 \qquad Q_{k,\delta}(U,V)dV_g=U\wedge V\wedge\Theta_{k,\delta}.
\end{equation}
For this fixed pair $(k,\delta)$, abbreviate $Q_{k,\delta}$ to $Q$
and set
\begin{equation}\label{eq:mixed-trace}
 h=Q(\omega,\omega),\qquad
 a=\frac{Q(\rho,\omega)}{h},\qquad
 E=\rho-a\omega,\qquad d\mu=h\,dV_g.
\end{equation}
Here a real $(1,1)$-form $U$ is called \emph{primitive relative
to the mixed pairing} if
\[
 U\wedge\omega\wedge\Theta_{k,\delta}=0,
 \quad\text{equivalently}\quad Q(U,\omega)=0.
\]
This is the mixed trace-free condition. Since $h>0$, the choice of
$a$ removes the $\omega$ component of $\rho$ in this pairing:
$Q(E,\omega)=Q(\rho,\omega)-ah=0$.
Thus $E$ is primitive in this sense; this need not mean
$\Lambda_\omega E=0$. The associated pointwise
Hodge--Riemann relation is recalled in Appendix~\ref{app:mixed-hr}.
These quantities are smooth, even where $R=0$, because $\delta>0$.

\begin{lemma}\label{lem:mixed-algebra}
The preceding definitions satisfy
\begin{equation}\label{eq:mixed-comparisons}
 c(R+\delta)^k\leq h\leq C(R+\delta)^k,\qquad
 \varepsilon R\leq a\leq\frac{R}{2m}.
\end{equation}
On the space $Q(U,\omega)=0$, the form $-Q$ is positive definite,
with the uniform bounds
\begin{equation}\label{eq:primitive-bounds}
 -Q(U,U)\geq c h|U|^2,\qquad
 |Q(U,V)|\leq C h|U||V|,
 \qquad -Q(E,E)\leq q^2a^2h.
\end{equation}
The middle inequality holds for arbitrary real $(1,1)$-forms.
For smooth real functions, define the differential expression
$P_0u=-h^{-1}Q(dd^cu,\omega)$. There is a smooth positive symmetric
tensor $D_{k,\delta}$ such that
\begin{equation}\label{eq:mixed-divergence}
 P_0u:=-h^{-1}Q(dd^cu,\omega)
       =-h^{-1}\Div(D_{k,\delta}\nabla u),\qquad
 c h g\leq D_{k,\delta}\leq C h g.
\end{equation}
For real smooth $f,u$, the boundary density satisfies
\begin{equation}\label{eq:boundary-density}
 |df\wedge d^cu\wedge\rho\wedge\Theta_{k,\delta}|
 \leq C R h|\nabla f||\nabla u|\,dV_g.
\end{equation}
\end{lemma}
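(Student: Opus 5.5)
The plan is to reduce everything to pointwise linear algebra in the Ricci eigenframe \eqref{eq:eigenvalues}, put in the constants by homogeneity and compactness, and get the divergence structure from closedness of $\omega\wedge\Theta_{k,\delta}$.

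\emph{Pointwise formulas.} Set $\mu_i=\lambda_i+\delta$. Then $\rho+\delta\omega=\sum_i\mu_i\alpha_i$, and, as in the proof of Lemma~\ref{lem:kahler-energy},
\[
 \Theta_{k,\delta}=k!\sum_{|I|=k}\mu_I\sum_{|J|=m-k-2,\ J\cap I=\varnothing}\alpha_I\wedge\alpha_J .
\]
For diagonal forms $U=\sum u_i\alpha_i$ and $V=\sum v_i\alpha_i$ this gives
\[
 Q(U,V)=c_{m,k}\sum_{i\neq j}u_iv_j\,\sigma_k(\mu\,|\,ij),
\]
where $\sigma_k(\mu\,|\,ij)$ is the $k$-th elementary symmetric function of the $\mu_l$ with $l\neq i,j$. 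Off-diagonal entries of $U$ and $V$ pair against each other with analogous nonnegative coefficients.

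\emph{Bounds on $h$ and $a$.} The formula gives $h=c\,\sigma_k(\mu)$. From \eqref{eq:eigenvalues},
\[
 \varepsilon(R+\delta)\leq \varepsilon R+\delta\leq\mu_i\leq R/2+\delta\leq R+\delta ,
\]
so $h\sim(R+\delta)^k$. Next, $a=\sum_iw_i\lambda_i/\sum_iw_i$ with weights $w_i=\sigma_k(\mu\,|\,i)>0$. Hence $a\geq\min_i\lambda_i\geq\varepsilon R$. The weights $w_i$ are nonincreasing in $\lambda_i$, so Chebyshev's sum inequality gives $a\leq\frac1m\sum_i\lambda_i=R/(2m)$. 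This proves \eqref{eq:mixed-comparisons}.

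\emph{Primitive bounds.} The upper bound $|Q(U,V)|\leq Ch|U||V|$ holds for arbitrary real $(1,1)$-forms, because every coefficient of $\Theta_{k,\delta}$ is at most $C(R+\delta)^k\leq Ch$. For the positivity $-Q(U,U)\geq ch|U|^2$ on primitive forms, invoke the mixed Hodge--Riemann relation of Appendix~\ref{app:mixed-hr}. It applies because $\rho+\delta\omega$ and $\omega$ are both strictly positive, so $-Q$ is positive definite on $\{Q(U,\omega)=0\}$.

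Uniformity of $c$ is the step I expect to be the main obstacle. I would get it by scaling: both sides are homogeneous of degree $k$ in $\mu$, so normalize $R+\delta=1$. The admissible $\mu$ then lie in the compact box $[\varepsilon,1]^m$. The smallest eigenvalue of $-Q/h$ on the primitive subspace depends continuously on $\mu$, since the primitive subspace varies continuously because $h>0$, and it is positive everywhere on the box. Its minimum is therefore a positive constant $c=c(m,k,\varepsilon)$.

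For the last inequality in \eqref{eq:primitive-bounds}, write $\rho'=\rho-\varepsilon R\omega\geq0$, so that $E=\rho'-(a-\varepsilon R)\omega$. Using $Q(E,\omega)=0$,
\[
 -Q(E,E)=(a-\varepsilon R)^2h-Q(\rho',\rho').
\]
Here $Q(\rho',\rho')\geq0$, since it is a wedge product of nonnegative diagonal forms. Moreover $0\leq a-\varepsilon R\leq qa$, which is equivalent to $2m\varepsilon a\leq\varepsilon R$, i.e.\ to $a\leq R/(2m)$. Hence $-Q(E,E)\leq q^2a^2h$.

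\emph{Divergence form and boundary density.} Since $d\omega=d\rho=0$, the form $\omega\wedge\Theta_{k,\delta}$ is closed. Hence
\[
 dd^cu\wedge\omega\wedge\Theta_{k,\delta}=d\bigl(d^cu\wedge\omega\wedge\Theta_{k,\delta}\bigr).
\]
The $(2m-1)$-form $d^cu\wedge\omega\wedge\Theta_{k,\delta}$ is linear in $du$, so it equals $\iota_{Y}dV_g$ with $Y=D_{k,\delta}\nabla u$ for a smooth endomorphism $D_{k,\delta}$. Therefore $Q(dd^cu,\omega)=\Div(D_{k,\delta}\nabla u)$, which is \eqref{eq:mixed-divergence}.

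To identify $D_{k,\delta}$, pair with $df$. The quantity $df\wedge d^cu\wedge\omega\wedge\Theta_{k,\delta}$ is $Q(\cdot,\omega)$ evaluated on the $(1,1)$-part of $df\wedge d^cu$. In the eigenframe it is diagonal, with $J$-paired eigenvalues proportional to the positive numbers $\sum_{j\neq i}\sigma_k(\mu\,|\,ij)\sim h$. This shows that $D_{k,\delta}$ is symmetric and satisfies $chg\leq D_{k,\delta}\leq Chg$.

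Finally, \eqref{eq:boundary-density} follows from the same coefficient count with $\rho$ in place of $\omega$. Since $|\lambda_i|\leq R/2$, each coefficient of $\rho\wedge\Theta_{k,\delta}$ is at most $CR(R+\delta)^k\leq CRh$.
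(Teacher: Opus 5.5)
Your proposal is correct and follows essentially the same route as the paper: the eigenframe expansion of $\Theta_{k,\delta}$, the weighted-mean formula for $a$ combined with Chebyshev's inequality, compactness on the normalized box $[\varepsilon,1]^m$ for the uniform Hodge--Riemann constant, the splitting $E=(\rho-\varepsilon R\omega)-(a-\varepsilon R)\omega$, and closedness of $\omega\wedge\Theta_{k,\delta}$ for the divergence form. The differences are cosmetic. You obtain the divergence structure pointwise from $d(\iota_Y dV_g)=(\Div Y)\,dV_g$, whereas the paper tests against $f$; the monotonicity of your weights $w_i$ should be justified by $\sigma_k(\mu\,|\,i)-\sigma_k(\mu\,|\,j)=(\mu_j-\mu_i)\sigma_{k-1}(\mu\,|\,i,j)$, which the paper writes out; and the off-diagonal terms of $Q$ enter with a minus sign, which is harmless for the upper bound.
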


\begin{proof}
Write $\ell_i=\lambda_i+\delta$ in the frame
\eqref{eq:eigenvalues}. The mixed pairing and its weight are
\begin{equation}\label{eq:mixed-expansion}
 \begin{aligned}
 Q(U,V)&=k!\sum_{i<j}\sigma_k(\ell\mid i,j)\cdot\bigl(U_{i\bar i}V_{j\bar j}
        +U_{j\bar j}V_{i\bar i}
        -2\operatorname{Re}(U_{i\bar j}\overline{V_{i\bar j}})\bigr),
 \end{aligned}
\end{equation}
\begin{equation}\label{eq:h-formula}
 h=k!(m-k)(m-k-1)\sigma_k(\ell).
\end{equation}
Set
\begin{equation}\label{eq:D-formula}
 D_i=k!(m-k-1)\sigma_k(\ell\mid i).
\end{equation}
Here $\sigma_k$ denotes the elementary symmetric polynomial of
degree $k$, with $\sigma_0=1$, and the indices following the vertical bar indicate
the variables to be omitted. Explicitly, for distinct indices
$i,j\in\{1,\ldots,m\}$,
\[
\begin{aligned}
\sigma_k(\ell\mid i)
&=\sum_{\substack{I\subset\{1,\ldots,m\}\setminus\{i\}\\ |I|=k}}
  \prod_{a\in I}\ell_a,\\
\sigma_k(\ell\mid i,j)
&=\sum_{\substack{I\subset\{1,\ldots,m\}\setminus\{i,j\}\\ |I|=k}}
  \prod_{a\in I}\ell_a.
\end{aligned}
\]
Thus these are the elementary symmetric polynomials in the
remaining variables after deleting $\ell_i$, or both $\ell_i$
and $\ell_j$, respectively. We use the convention
$\sigma_0(\ell\mid i)=\sigma_0(\ell\mid i,j)=1$.Then $Q(U,\omega)=\sum_iD_iU_{i\bar i}$ and $\sum_iD_i=h$.
The detailed calculations for
\eqref{eq:mixed-expansion}--\eqref{eq:D-formula} are given in
\eqref{eq:app-mixed-expansion}--\eqref{eq:app-D-formula} of
Appendix~\ref{app:mixed-hr}.

We construct $D_{k,\delta}$ intrinsically by
\[
 D_{k,\delta}(X,Y)\,dV_g
   =-X^\flat\wedge(Y^\flat\circ J)
                         \wedge\omega\wedge\Theta_{k,\delta},
\]
where $X^\flat=g(X,\cdot)$. The expansion in the Ricci eigenframe is
\[
 D_{k,\delta}
   =\sum_iD_i(\theta_i\otimes\theta_i+\psi_i\otimes\psi_i).
\]
Thus it is symmetric and positive definite, while the intrinsic
formula shows smoothness even at repeated eigenvalues and
independence of the chosen frame.
Since $\varepsilon(R+\delta)\leq\ell_i\leq R+\delta$, the
positive polynomial formulas give
$c(R+\delta)^k\leq D_i\leq C(R+\delta)^k$ and the same bounds for $h$.
This proves the bound for $h$ in \eqref{eq:mixed-comparisons}
and $chg\leq D_{k,\delta}\leq Chg$ in \eqref{eq:mixed-divergence}.

It remains to identify $P_0$ in divergence form. Since
$d\rho=d\omega=0$, the form $\omega\wedge\Theta_{k,\delta}$
is closed. For real $f\in C_c^\infty(M)$ and smooth real $u$,
Stokes' theorem and $d^cu=-du\circ J$ give
\[
 \begin{aligned}
 \int_MfhP_0u\dd
 &=-\int_M f\,dd^cu\wedge\omega\wedge\Theta_{k,\delta}\\
 &=\int_Mdf\wedge d^cu\wedge\omega\wedge\Theta_{k,\delta}
   =\int_MD_{k,\delta}(\nabla f,\nabla u)\dd.
 \end{aligned}
\]
Integration by parts in the last expression proves
$hP_0u=-\Div(D_{k,\delta}\nabla u)$ distributionally, hence
pointwise for smooth $u$. This proves \eqref{eq:mixed-divergence}.

Next, the formula for the mixed trace gives
\[
 a=\frac{\sum_i\lambda_iD_i}{\sum_iD_i}\geq\varepsilon R.
\]
For the upper bound, expansion of \eqref{eq:D-formula} gives
\[
 D_i-D_j=k!(m-k-1)(\lambda_j-\lambda_i)
                         \sigma_{k-1}(\ell\mid i,j).
\]
Consequently,
\[
 m\sum_i\lambda_iD_i-\left(\sum_i\lambda_i\right)\left(\sum_iD_i\right)
 =\sum_{i<j}(\lambda_i-\lambda_j)(D_i-D_j)\leq0,
\]
which proves $a\leq R/(2m)$.

For the primitive estimate, apply
Corollary~\ref{cor:uniform-mixed-hr} of Appendix~\ref{app:mixed-hr}
pointwise to $A=(\rho+\delta\omega)/(R+\delta)$.
Its eigenvalues lie in $[\varepsilon,1]$, and both $Q$ and $h$
are $(R+\delta)^k$ times the corresponding normalized quantities.
This proves the uniform lower bound in \eqref{eq:primitive-bounds}.
Formula~\eqref{eq:mixed-expansion} gives the upper bound.

For the sharper bound on $E$, let
$\Gamma=\rho-\varepsilon R\omega\geq0$ and
$b=a-\varepsilon R\geq0$. Then $E=\Gamma-b\omega$ and
$Q(\Gamma,\omega)=bh$. Positivity of wedge products of semipositive
forms gives $Q(\Gamma,\Gamma)\geq0$, so
\[
 -Q(E,E)=b^2h-Q(\Gamma,\Gamma)\leq b^2h.
\]
Since $a\leq R/(2m)$, one has $b\leq(1-2m\varepsilon)a=qa$.
This proves the last bound in \eqref{eq:primitive-bounds}.
Only pointwise positivity was used here; no closedness of
$\Gamma$ is required.

Finally, expand $\rho=\sum_j\lambda_j\alpha_j$ and use the
complementary-index expansion of $\Theta_{k,\delta}$ from
Appendix~\ref{app:mixed-hr}. Only the $\alpha_i$ component of
$df\wedge d^cu$ can be completed by the other factors. Thus
\[
 \begin{aligned}
 df\wedge d^cu\wedge\rho\wedge\Theta_{k,\delta}
 &=\sum_i b_i\bigl((e_if)(e_iu)+(Je_if)(Je_iu)\bigr)\,dV_g,\\
 b_i&=k!\sum_{j\ne i}\lambda_j\sigma_k(\ell\mid i,j).
 \end{aligned}
\]
Since $0\leq\lambda_j\leq R/2$ and
$k!\sum_{j\ne i}\sigma_k(\ell\mid i,j)=D_i$, we have
$0\leq b_i\leq(R/2)D_i$. Cauchy--Schwarz in this frame yields
\[
 \begin{aligned}
 |df\wedge d^cu\wedge\rho\wedge\Theta_{k,\delta}|
 &\leq\frac R2
   \bigl[D_{k,\delta}(\nabla f,\nabla f)
         D_{k,\delta}(\nabla u,\nabla u)\bigr]^{1/2}\,dV_g\\
 &\leq CRh|\nabla f||\nabla u|\,dV_g.
 \end{aligned}
\]
The last step uses $D_{k,\delta}\leq Chg$ and proves
\eqref{eq:boundary-density}.
\end{proof}

We now pass to the operator on $L^2(M,d\mu)$, where
$d\mu=h\,dV_g$. Extend $P_0$ in \eqref{eq:mixed-divergence}
complex linearly and restrict its domain to $C_c^\infty(M)$.
Since $h$ is smooth and positive and
$chg\leq D_{k,\delta}\leq Chg$, Lemma~\ref{lem:operator}
shows that this operator is essentially self-adjoint.
Henceforth,
\[
 P:=\overline{P_0}
 \quad\text{on }L^2(M,d\mu)
\]
denotes its nonnegative self-adjoint realization. Its domain is
the distributional domain described in Lemma~\ref{lem:operator},
and $Pu=P_0u$ on $C_c^\infty(M)$. Write $\norm{\cdot}_\mu$
for the $L^2(d\mu)$ norm and set
\[
 \mathcal E_{k,\delta}(u)
   =\int_M D_{k,\delta}(\nabla u,\nabla u)\dd.
\]
In particular, this energy is comparable to
$\int_M(R+\delta)^k|\nabla u|^2\dd$ with constants independent
of $\delta$.

We next establish the weighted energy and resolvent pairing
identities that will be used to estimate higher powers of the
scalar curvature.

\begin{proposition}\label{prop:mixed-energy}
Fix $k,\delta$ as above, and let $P$ be the self-adjoint operator
just defined. The following assertions hold.
\begin{enumerate}[label=\textup{(\roman*)},leftmargin=*]
\item Let $u\in\Dom(P)$ be real and set $v=Pu$.
Then the distributional form $dd^cu$ has a representative in
$L^2(h\,dV_g)$. Defining $\beta=dd^cu+v\omega$ with this
representative, one has $Q(\beta,\omega)=0$ almost everywhere and
\begin{equation}\label{eq:mixed-energy}
 \int_M-Q(\beta,\beta)\dd=\norm v_\mu^2,\qquad
 \int_Mh|dd^cu|^2\dd\leq C\norm v_\mu^2.
\end{equation}
\item Let $\eta\in C_c^\infty(M)$ be real and let $\tau>0$.
Set $F=\eta a$, $u=(P+\tau)^{-1}F$, and $v=Pu$.
Then $u$ is real and belongs to $\Dom(P)$, so \textup{(i)} applies.
For the resulting $\beta$, the pairing identity is
\begin{equation}\label{eq:mixed-pairing}
 \begin{aligned}
 \ip Fv_\mu
   &=\int_M\eta Q(E,\beta)\dd+\mathcal B_{k,\delta}(\eta,u),\\
 \mathcal B_{k,\delta}(\eta,u)
   &:=\int_Md\eta\wedge d^cu\wedge\rho\wedge\Theta_{k,\delta}.
 \end{aligned}
\end{equation}
Moreover,
\begin{align}
 \left|\int_M\eta Q(E,\beta)\dd\right|
       &\leq q\norm F_\mu\norm v_\mu,\label{eq:strict-absorption}\\
 |\mathcal B_{k,\delta}(\eta,u)|
       &\leq C\left(\int_MR^2h|\nabla\eta|^2\dd\right)^{1/2}
                         \mathcal E_{k,\delta}(u)^{1/2}.
                         \label{eq:boundary-energy}
\end{align}
\end{enumerate}
\end{proposition}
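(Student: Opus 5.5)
The plan is to prove both parts first for compactly supported smooth functions, where Stokes' theorem applies directly, and then pass to the full domain through the graph-core approximation of Lemma~\ref{lem:operator}. This is the same pattern as in the proof of Lemma~\ref{lem:kahler-energy}.

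For \textup{(i)}, take real $u\in C_c^\infty(M)$ and put $\beta=dd^cu+v\omega$ with $v=P_0u=-h^{-1}Q(dd^cu,\omega)$. Then $Q(\beta,\omega)=Q(dd^cu,\omega)+vh=0$ pointwise. Expanding,
\[
 -Q(\beta,\beta)=-Q(dd^cu,dd^cu)-2vQ(dd^cu,\omega)-v^2h
 =-Q(dd^cu,dd^cu)+v^2h .
\]
The form $dd^cu\wedge\Theta_{k,\delta}$ is closed, since $\rho$ and $\omega$ are closed. Hence $Q(dd^cu,dd^cu)\,dV_g=d(d^cu\wedge dd^cu\wedge\Theta_{k,\delta})$, and its integral vanishes by compact support. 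This gives $\int_M-Q(\beta,\beta)\dd=\norm v_\mu^2$.

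For the second inequality in \eqref{eq:mixed-energy}, \eqref{eq:primitive-bounds} gives $h|\beta|^2\leq -C\,Q(\beta,\beta)$ because $\beta$ is mixed-primitive. Also $|v\omega|^2=mv^2$, so
\[
 h|dd^cu|^2\leq 2h|\beta|^2+2mhv^2 .
\]
Integrating gives $\int_M h|dd^cu|^2\dd\leq C\norm v_\mu^2$.

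To extend to general real $u\in\Dom(P)$, take $u_j\in C_c^\infty(M)$ with $u_j\to u$ and $Pu_j\to Pu$ in $L^2(d\mu)$; real parts may be used since $P_0$ is real. Applying the bound to $u_j-u_\ell$ shows that $dd^cu_j$ is Cauchy in $L^2(h\,dV_g)$. Since $h$ is locally bounded below, its limit equals the distributional $dd^cu$. Both identities then pass to the limit, because $Q$ is bounded by $Ch$ by \eqref{eq:primitive-bounds}.

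For \textup{(ii)}, $u$ is real because $F$ is real and the resolvent commutes with complex conjugation. By Lemma~\ref{lem:operator}, $u$ is also smooth and lies in $\Dom(P)$. The pairing identity follows by computing $\ip Fv_\mu=\int_M\eta a v h\dd$. Since $Q(E,\omega)=0$ and $Q(\omega,\omega)=h$,
\[
 avh=aQ(\beta,\omega)-Q(\rho,\beta)+Q(\rho,\beta)
 =-Q(E,\beta)+Q(\rho,\beta)+\bigl(aQ(\beta,\omega)\bigr),
\]
and $Q(\beta,\omega)=0$. The sign convention has to be matched against the stated formula; this careful computation is the one I would redo. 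The useful point is the identity $Q(\rho,\beta)=Q(\rho,dd^cu)+vQ(\rho,\omega)$, together with $vQ(\rho,\omega)=avh$.

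So I would write
\[
 \eta Q(\rho,dd^cu)\,dV_g=\eta\,dd^cu\wedge\rho\wedge\Theta_{k,\delta},
\]
and integrate by parts using closedness of $\rho\wedge\Theta_{k,\delta}$ and compact support of $\eta$. This gives
\[
 \int_M\eta Q(\rho,dd^cu)\dd=-\mathcal B_{k,\delta}(\eta,u)
 \quad\text{(up to sign conventions).}
\]
Combined with $Q(\rho,\beta)=Q(E,\beta)+aQ(\omega,\beta)=Q(E,\beta)$, this rearranges to \eqref{eq:mixed-pairing}. Only $\eta$ needs compact support here, and $u$ is smooth, so Stokes is legitimate.

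For \eqref{eq:strict-absorption}, I would use the Cauchy--Schwarz inequality for the positive definite form $-Q$ on mixed-primitive forms, applied pointwise to $E$ and $\beta$:
\[
 |Q(E,\beta)|\leq(-Q(E,E))^{1/2}(-Q(\beta,\beta))^{1/2}\leq qah^{1/2}(-Q(\beta,\beta))^{1/2},
\]
where the last step uses $-Q(E,E)\leq q^2a^2h$. Multiplying by $|\eta|$ and applying Cauchy--Schwarz in $L^2(dV_g)$ with \eqref{eq:mixed-energy} gives
\[
 \left|\int_M\eta Q(E,\beta)\dd\right|\leq q\Bigl(\int_M\eta^2a^2h\dd\Bigr)^{1/2}\norm v_\mu=q\norm F_\mu\norm v_\mu .
\]

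For \eqref{eq:boundary-energy}, I would integrate the pointwise bound \eqref{eq:boundary-density} and apply Cauchy--Schwarz:
\[
 \int_M Rh|\nabla\eta||\nabla u|\dd
 \leq\Bigl(\int_M R^2h|\nabla\eta|^2\dd\Bigr)^{1/2}\Bigl(\int_M h|\nabla u|^2\dd\Bigr)^{1/2}.
\]
The second factor is at most $C\,\mathcal E_{k,\delta}(u)^{1/2}$ because $D_{k,\delta}\geq chg$.

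The main obstacle I expect is the approximation in \textup{(i)}. Specifically, one must identify the $L^2(h)$ limit of $dd^cu_j$ with the distributional $dd^cu$. One must also confirm that the pointwise identity $Q(\beta,\omega)=0$ survives, which is true since both sides converge in $L^1_{\mathrm{loc}}$. The sign bookkeeping in the pairing identity is the other delicate point.
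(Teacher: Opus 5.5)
Your proposal is correct and follows essentially the same route as the paper: Stokes' theorem for compactly supported smooth $u$, extension to $\Dom(P)$ through the graph core with identification of the $L^2(h\,dV_g)$ limit of $dd^cu_j$, Stokes' theorem on the compactly supported form $\eta\,d^cu\wedge\rho\wedge\Theta_{k,\delta}$, pointwise Cauchy--Schwarz for $-Q$ on primitive forms, and \eqref{eq:boundary-density}. The one flaw is your displayed expansion of $avh$, which as written asserts $avh=aQ(\beta,\omega)$, i.e.\ $avh=0$. The identities you list right after it repair this: $Q(E,\beta)=Q(\rho,\beta)=Q(\rho,dd^cu)+avh$, and $\int_M\eta Q(\rho,dd^cu)\dd=-\mathcal B_{k,\delta}(\eta,u)$, which holds exactly (not merely up to sign) because $\rho\wedge\Theta_{k,\delta}$ is closed; multiplying by $\eta$ and integrating then gives \eqref{eq:mixed-pairing} directly.
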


\begin{proof}
\emph{Part \textup{(i)}.}
First let $u\in C_c^\infty(M)$ be real, and write
$A=dd^cu$, $v=Pu$, and $\beta=A+v\omega$.
Since $dA=d\Theta_{k,\delta}=0$, the product rule gives
\[
 d\bigl(d^cu\wedge A\wedge\Theta_{k,\delta}\bigr)
       =A\wedge A\wedge\Theta_{k,\delta}.
\]
The form inside the derivative is compactly supported, so Stokes'
theorem yields
\[
 \int_M Q(A,A)\dd
       =\int_M d\bigl(d^cu\wedge A\wedge\Theta_{k,\delta}\bigr)=0.
\]
By the definition of $P$ on smooth functions,
$Q(A,\omega)=-hv$, while $Q(\omega,\omega)=h$. Hence
\[
 \begin{aligned}
 Q(\beta,\omega)&=Q(A,\omega)+vh=0,\\
 Q(\beta,\beta)&=Q(A,A)+2vQ(A,\omega)+v^2h
                         =Q(A,A)-hv^2.
 \end{aligned}
\]
Integrating the second equality proves
$\int_M-Q(\beta,\beta)\dd=\norm v_\mu^2$.
Because $\beta$ is primitive, the coercivity in
\eqref{eq:primitive-bounds} then gives
\[
 \int_Mh|\beta|^2\dd\leq c^{-1}\norm v_\mu^2.
\]
Using $A=\beta-v\omega$ and $|\omega|^2=m$, we obtain
\[
 \int_Mh|dd^cu|^2\dd
 \leq2\int_Mh|\beta|^2\dd+2m\norm v_\mu^2
 \leq C\norm v_\mu^2.
\]
Now let $u\in\Dom(P)$ be real and set $v=Pu$.
Since $C_c^\infty(M)$ is a graph core, choose real
$u_j\in C_c^\infty(M)$ such that $u_j\to u$ and
$v_j:=Pu_j\to v$ in $L^2(d\mu)$. Real approximants can be
obtained by taking real parts, since $P$ has real coefficients.
Applying the preceding estimate to $u_j-u_\ell$ gives
\[
 \int_Mh|dd^cu_j-dd^cu_\ell|^2\dd
       \leq C\norm{v_j-v_\ell}_\mu^2\longrightarrow0.
\]
Consequently $dd^cu_j$ converges to a real $(1,1)$-form $A$
in $L^2(h\,dV_g)$. On each compact set $h$ has a positive
lower bound, so $u_j\to u$ and $dd^cu_j\to A$ locally in
$L^2$. Distributional differentiation is continuous, and thus
$dd^cu_j\to dd^cu$ in $\mathcal D'$. These two limits identify
$A$ as the weighted $L^2$ representative of $dd^cu$.

Set $\beta=A+v\omega$ and $\beta_j=dd^cu_j+v_j\omega$.
The preceding convergences and $|\omega|^2=m$ imply
$\beta_j\to\beta$ in $L^2(h\,dV_g)$.
We next pass the primitive condition to this limit.
Bilinearity and $Q(\beta_j,\omega)=0$ give
\[
 Q(\beta,\omega)
   =Q(\beta-\beta_j,\omega)+Q(\beta_j,\omega)
   =Q(\beta-\beta_j,\omega).
\]
The general bound for $Q$ in \eqref{eq:primitive-bounds} yields
\[
 \int_M\frac{|Q(\beta,\omega)|^2}{h}\dd
       \leq C^2m\int_Mh|\beta-\beta_j|^2\dd
       \longrightarrow0.
\]
The left side is independent of $j$ and nonnegative; it is
therefore zero, proving $Q(\beta,\omega)=0$ almost everywhere.
The same bound and Cauchy--Schwarz give
\[
 \begin{aligned}
 \left|\int_M\bigl[Q(\beta_j,\beta_j)-Q(\beta,\beta)\bigr]\dd\right|
 &\leq C\norm{\beta_j-\beta}_{L^2(d\mu)}\\
 &\quad\cdot\bigl(\norm{\beta_j}_{L^2(d\mu)}
                         +\norm\beta_{L^2(d\mu)}\bigr)
       \longrightarrow0.
 \end{aligned}
\]
Here the quadratic integrals are finite because
$|Q(U,U)|\leq Ch|U|^2$. Passing to the limit in
$\int_M-Q(\beta_j,\beta_j)\dd=\norm{v_j}_\mu^2$
proves the first identity in \eqref{eq:mixed-energy}; strong
convergence of $dd^cu_j$ gives the second inequality.

\smallskip\goodbreak
\noindent\emph{Part \textup{(ii)}.}
The resolvent gives $u\in\Dom(P)$. Real coefficients and
uniqueness make $u$ real. Since $F=\eta a$ is smooth, the
distributional equation $(P_0+\tau)u=F$ and local elliptic
regularity give $u\in C^\infty(M)$. Part \textup{(i)}
supplies the global weighted bounds.

To apply Stokes' theorem, observe that
$d(\rho\wedge\Theta_{k,\delta})=0$ because
$d\rho=d\omega=0$. The $(2m-1)$-form
$\eta\,d^cu\wedge\rho\wedge\Theta_{k,\delta}$ is compactly
supported, even though $u$ need not be. The product rule and
Stokes' theorem therefore give
\[
 \begin{aligned}
 0&=\int_Md\bigl(\eta\,d^cu\wedge\rho\wedge\Theta_{k,\delta}\bigr)\\
  &=\int_Md\eta\wedge d^cu\wedge\rho\wedge\Theta_{k,\delta}
       +\int_M\eta\,dd^cu\wedge\rho\wedge\Theta_{k,\delta}\\
  &=\mathcal B_{k,\delta}(\eta,u)
       +\int_M\eta Q(\rho,dd^cu)\dd.
 \end{aligned}
\]
Thus $\int_M\eta Q(\rho,dd^cu)\dd=-\mathcal B_{k,\delta}(\eta,u)$.
Since $E=\rho-a\omega$, $Q(E,\omega)=0$, and
$Q(\omega,dd^cu)=-hv$, we obtain
\[
 \begin{aligned}
 \int_M\eta Q(E,\beta)\dd
 &=\int_M\eta Q(E,dd^cu)\dd\\
 &=\int_M\eta Q(\rho,dd^cu)\dd
       -\int_M\eta aQ(\omega,dd^cu)\dd\\
 &=-\mathcal B_{k,\delta}(\eta,u)+\int_M\eta ahv\dd\\
 &=-\mathcal B_{k,\delta}(\eta,u)+\ip Fv_\mu.
 \end{aligned}
\]
This proves \eqref{eq:mixed-pairing}. Here Stokes' theorem uses
only the closed factors $\rho$ and $\Theta_{k,\delta}$; the
coefficient $a$ enters through the subsequent algebraic expansion.

Both $E$ and $\beta$ are primitive, so pointwise
Cauchy--Schwarz for the positive definite form $-Q$, followed
by the integral Cauchy--Schwarz inequality, gives
\[
 \begin{aligned}
 \left|\int_M\eta Q(E,\beta)\dd\right|
 &\leq\left(\int_M\eta^2[-Q(E,E)]\dd\right)^{1/2}
      \left(\int_M-Q(\beta,\beta)\dd\right)^{1/2}\\
 &\leq q\left(\int_M\eta^2a^2h\dd\right)^{1/2}\norm v_\mu
       =q\norm F_\mu\norm v_\mu.
 \end{aligned}
\]
Here we used $-Q(E,E)\leq q^2a^2h$ and part \textup{(i)}.
Finally, \eqref{eq:boundary-density}, Cauchy--Schwarz, and
$D_{k,\delta}\geq chg$ give
\[
 \begin{aligned}
 |\mathcal B_{k,\delta}(\eta,u)|
 &\leq C\int_MRh|\nabla\eta||\nabla u|\dd\\
 &\leq C\left(\int_MR^2h|\nabla\eta|^2\dd\right)^{1/2}
                         \mathcal E_{k,\delta}(u)^{1/2}.
 \end{aligned}
\]
The energy is finite by Lemma~\ref{lem:operator}, and
\eqref{eq:boundary-energy} follows.
\end{proof}

\subsection*{Localized estimates for powers of the scalar curvature}

We can now derive the following higher-power integral estimates
for the scalar curvature.

\begin{proposition}\label{prop:hierarchy}
Let $(M^{2m},g,J)$ be the K\"ahler manifold in Theorem \ref{thm:main}.
Then, for every integer $2\leq p\leq m$, every $\delta>0$, and
every real $\eta\in C_c^\infty(M)$,
\begin{equation}\label{eq:hierarchy}
 \int_M\eta^2R^p\dd
 \leq C_{m,\varepsilon,p}\left(
      \delta^{-1}\int_M(R+\delta)^p|\nabla\eta|^2\dd
                    +\delta^p\int_M\eta^2\dd\right).
\end{equation}
\end{proposition}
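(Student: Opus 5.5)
The plan is an induction on $p$ in two chains, $p=2,4,6,\dots$ and $p=3,5,\dots$. The base cases come from Proposition~\ref{prop:quadratic}, and the step from order $k$ to order $k+2$ uses the mixed operator $P=P_{k,\delta}$ of Proposition~\ref{prop:mixed-energy} with $1\le k\le m-2$.

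\emph{Base case $p=2$.} Since $(R+\delta)^2\ge 4R\delta$, we have $R\le(R+\delta)^2/(4\delta)$, and \eqref{eq:quadratic} gives \eqref{eq:hierarchy} directly.

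\emph{Base case $p=3$.} I would apply \eqref{eq:quadratic} with $\eta$ replaced by $\eta(R+\delta)^{1/2}$, or more safely by $\eta\,\phi$ with $\phi$ a smooth regularization of $(R+\delta)^{1/2}$. The resulting gradient term $\int\eta^2R|\nabla\phi|^2$ involves $|\nabla R|$, which is not controlled. So I expect the cubic case actually to come from the mixed estimate with $k=1$ once the $p=2$ bound is in hand. The intro's phrase ``the quadratic estimate supplies the cubic case'' would then mean that the $k=1$ step is fed by the quadratic estimate rather than by an order-one estimate. With this reading both chains start: $k=1$ takes $p=2$ to $p=3$, and in general the step at index $k$ takes order $k+1$ to order $k+2$.

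\emph{Inductive step.} Take $F=\eta a$, $u=(P+\tau)^{-1}F$, $v=Pu$, and $X=\norm F_\mu$. By \eqref{eq:mixed-comparisons},
\[
X^2=\int\eta^2a^2h\,dV_g\approx\int\eta^2R^2(R+\delta)^k\,dV_g,
\]
which is (up to a $\delta$-term) the order-$(k+2)$ quantity. Combining \eqref{eq:mixed-pairing}, \eqref{eq:strict-absorption} and \eqref{eq:boundary-energy} gives
\[
\ip Fv_\mu\le qX\norm v_\mu+CG_k\,\mathcal E_{k,\delta}(u)^{1/2},\qquad
G_k^2=\int R^2(R+\delta)^k|\nabla\eta|^2\,dV_g\le\int(R+\delta)^{k+2}|\nabla\eta|^2\,dV_g .
\]
Writing $\ip Fv_\mu=X^2-\tau\ip Fu_\mu$, I get
\[
(1-q)X^2\le \tau\ip Fu_\mu+CG_k\,\mathcal E_{k,\delta}(u)^{1/2}.
\]
Both error terms are controlled spectrally by $\int\min(1,\tau/\lambda)\,d\nu_F$ and $\int\lambda(\lambda+\tau)^{-2}\,d\nu_F$.

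\emph{Main obstacle.} The crude bound $\mathcal E\le X^2/(4\tau)$ from \eqref{eq:spectral} blows up as $\tau\downarrow0$. Sending $\tau\to0$ as in Proposition~\ref{prop:quadratic} is therefore not possible; instead $\tau$ must be kept positive and comparable to a power of $\delta$. With that choice, $\tau\ip Fu_\mu$ is no longer automatically small, and I would control it by the lower-order estimate.

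Concretely, I would bound $\tau\ip Fu_\mu\le\tau\norm u_\mu X$ and estimate $\tau\norm u_\mu$ by testing the resolvent equation $(P+\tau)u=F$ against $u\eta'^2$ for a slightly larger cutoff $\eta'$. This produces an inequality in which $\tau\norm{u}_\mu$ is bounded by local quantities of the form $\int\eta'^2R^{k+1}(R+\delta)\,dV_g$, plus $\delta$-powers times $\int\eta'^2\,dV_g$. Those local quantities are exactly what the order-$(k+1)$ (or order-$k$) inductive hypothesis controls.

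I would choose $\tau=\theta\delta^2$ with $\theta$ small. Then the $\tau$-term is absorbed into $(1-q)X^2/2$ up to terms of the form $\delta^{-1}\int(R+\delta)^{k+2}|\nabla\eta|^2\,dV_g+\delta^{k+2}\int\eta^2\,dV_g$. The energy term is handled by Young's inequality:
\[
CG_k\,\mathcal E^{1/2}\le CG_k X/(2\sqrt\tau)\le\tfrac14(1-q)X^2+C\delta^{-2}G_k^2 .
\]
The factor $\delta^{-2}$ is then reduced to $\delta^{-1}$ by using $R^2\le(R+\delta)^{2}$ and borrowing one power of $(R+\delta)\ge\delta$.

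\emph{Conclusion of the step.} Finally I would pass from $X^2$ to $\int\eta^2R^{k+2}\,dV_g$ using $a\ge\varepsilon R$ and $h\ge c(R+\delta)^k\ge cR^k$. The cutoff $\eta$ is handled via Proposition~\ref{prop:compact-approximation}, and the constants remain independent of $\delta$ because every comparison in Lemma~\ref{lem:mixed-algebra} is uniform in $\delta$.
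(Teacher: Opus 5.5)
There is a genuine gap. Your base case $p=2$ and your reduction to
\[
(1-q)\norm F_\mu^2\leq\tau\ip{F}{u}_\mu+C G_k\,\mathcal E_{k,\delta}(u)^{1/2}
\]
agree with the paper. The missing step is the one that drives the induction: making $\tau\ip{F}{u}_\mu$ small compared with $(1-q)\norm F_\mu^2$. Testing $(P+\tau)u=F$ against $\eta'^2u$ cannot do this. It only gives the Caccioppoli identity
\[
\int_M\eta'^2D_{k,\delta}(\nabla u,\nabla u)\dd+\tau\norm{\eta' u}_\mu^2
=\ip{F}{\eta'^2u}_\mu-2\int_M\eta' u\,D_{k,\delta}(\nabla u,\nabla\eta')\dd .
\]
From this you get at best $\tau\norm{\eta'u}_\mu\lesssim\norm F_\mu$ plus a term in $u$ on the support of $\nabla\eta'$. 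That is the same coefficient $1$ as the spectral bound $\tau\norm u_\mu\leq\norm F_\mu$, and no curvature integral like $\int\eta'^2R^{k+1}(R+\delta)\dd$ appears. Even if one did, it would contain $\int\eta'^2R^{k+2}\dd$ with a larger cutoff. That is the order being proved, and it cannot be absorbed into an inequality valid for arbitrary $\eta$ with the stated right-hand side.

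Since $\tau\ip{F}{u}_\mu=\int\tau(\lambda+\tau)^{-1}\,d\nu_F(\lambda)$, smallness means $F$ has little spectral mass below $\tau$. That is a weighted Hardy--Poincar\'e property of $P$, which local energy tests do not produce. The paper obtains it as follows:
\begin{itemize}
\item Split $F$ over $\{R<\delta\}$ and $\{R\geq\delta\}$.
\item On $\{R<\delta\}$, $\norm{F\mathbf 1_{\{R<\delta\}}}_\mu^2\leq C\delta^{k+2}\int_M\eta^2\dd$, and Young's inequality suffices.
\item On $\{R\geq\delta\}$, $h\leq CR^k$, and the order-$k$ estimate is applied with the resolvent solution $u$ itself as the test function (via graph-core approximation), at the smaller scale $t\delta$:
\[
\int_MR^ku^2\dd\leq C_k\Bigl((t\delta)^{-1}\mathcal E_{k,\delta}(u)+t^k\norm u_\mu^2\Bigr).
\]
With $\mathcal E_{k,\delta}(u)\leq\norm F_\mu^2/(4\tau)$ and $\tau\norm u_\mu\leq\norm F_\mu$, this gives the coefficient $M_k(\sqrt{\xi/t}+t^{k/2})$. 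It is made small by choosing $t$ first and then $\xi$.
\item For $k=1$, the quadratic estimate $\int R^2z^2\dd\leq C\int R|\nabla z|^2\dd$ with $z=u$ plays this role. Your reading that the cubic case is fed by the quadratic estimate is right in spirit.
\end{itemize}
The gradient weight in this inequality must match the energy $\mathcal E_{k,\delta}\approx\int(R+\delta)^k|\nabla u|^2\dd$. That is why the step goes from order $k$ to order $k+2$, giving the two chains. Your ``order $k+1$ to $k+2$'' version fails, because $\mathcal E_{k,\delta}$ does not control $\int(R+\delta)^{k+1}|\nabla u|^2\dd$.

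A second, concrete error is the choice $\tau=\theta\delta^2$. $P$ scales like the Laplacian, $\delta$ scales like curvature, and \eqref{eq:hierarchy} is scale invariant, so $\tau$ must be $\xi\delta$. With $\tau=\theta\delta^2$, Young's inequality leaves $C\delta^{-2}\int_MR^2(R+\delta)^k|\nabla\eta|^2\dd$. Turning this into $\delta^{-1}\int_M(R+\delta)^{k+2}|\nabla\eta|^2\dd$ would require $R^2\leq C\delta(R+\delta)^2$, which fails for small $\delta$; the bound $R+\delta\geq\delta$ points the wrong way. With $\tau=\xi\delta$ the boundary term is fine directly. But then shrinking $\xi$ does nothing for the $\tau$-term, since the factor $\tau$ cancels in $\tau\norm F_\mu\norm u_\mu\leq\norm F_\mu^2$. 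This is exactly why the Hardy-type input above is indispensable.
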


\begin{proof}
\noindent\emph{The quadratic base case and the resolvent setup.}
The quadratic estimate \eqref{eq:quadratic} and
$R\leq(R+\delta)^2/(4\delta)$ prove the case $p=2$.
We prove the estimates for higher powers
on the same manifold by induction, using $p=k+2$ and the mixed
operator with index $k$.

Fix $1\leq k\leq m-2$, $\delta>0$, and $\eta$.
Use the notation \eqref{eq:mixed-trace}, and put
\begin{equation}\label{eq:resolvent-choice}
 F=\eta a,\qquad u=(P+\tau)^{-1}F,\qquad v=Pu,
 \qquad \tau=\xi\delta,
\end{equation}
where the positive number $\xi$ will be chosen below, independently
of $\delta$ and $\eta$. If $F=0$, then
\eqref{eq:mixed-comparisons} implies $\eta R=0$, and the desired
estimate is immediate. We may therefore suppose $\norm F_\mu>0$.
By \eqref{eq:spectral},
\begin{equation}\label{eq:fixed-mass-bounds}
 \norm v_\mu\leq\norm F_\mu,\qquad
 \tau\norm u_\mu\leq\norm F_\mu,\qquad
 \mathcal E_{k,\delta}(u)\leq\frac{\norm F_\mu^2}{4\tau}.
\end{equation}
The identity $F=v+\tau u$ gives
$\norm F_\mu^2=\ip Fv_\mu+\tau\ip Fu_\mu$.
Combining this with
\eqref{eq:mixed-pairing}--\eqref{eq:strict-absorption} and
$\norm v_\mu\leq\norm F_\mu$ yields
\begin{equation}\label{eq:defect-start}
 (1-q)\norm F_\mu^2
 \leq |\mathcal B_{k,\delta}(\eta,u)|
                       +\tau\ip Fu_\mu.
\end{equation}

We first estimate the term $\tau\langle F,u\rangle_\mu$
by splitting the integral into the high-curvature region
$\{R\geq\delta\}$ and the low-curvature region $\{R<\delta\}$.

\par\smallskip
\noindent\emph{The high-curvature region $\{R\geq\delta\}$ for $k\geq2$.}
We first estimate the resolvent remainder over $\{R\geq\delta\}$.
Suppose $k\geq2$ and assume that \eqref{eq:hierarchy} has been
proved at order $k$, for every positive scale. Fix $0<t<1$.
Before applying it to the noncompactly supported $u$, we justify
that extension separately. Choose real $u_j\in C_c^\infty(M)$
converging to $u$ in the graph norm of $P$. Its energy convergence
follows from Lemma~\ref{lem:operator}. For every difference
$z=u_j-u_\ell$, the already established order-$k$ estimate at
scale $t\delta$ gives
\begin{align*}
 \int_M R^kz^2\dd
 &\leq C_k\left((t\delta)^{-1}
          \int_M(R+t\delta)^k|\nabla z|^2\dd
                   +(t\delta)^k\int_M z^2\dd\right)\\
 &\leq C_k\left((t\delta)^{-1}\mathcal E_{k,\delta}(z)
                         +t^k\|z\|_\mu^2\right)
 \longrightarrow0.
\end{align*}
The second line uses $D_{k,\delta}\geq c(R+\delta)^kg$,
$t<1$, and $h\geq c\delta^k$. Hence $R^{k/2}u_j$ converges
to some $G\in L^2(M,dV_g)$ by completeness.

\goodbreak
To identify $G$, fix a compact set $K\subset M$. Since $R$ is
bounded on $K$ and $h$ has a positive lower bound there,
\[
 \|R^{k/2}(u_j-u)\|_{L^2(K)}
 \leq\underbrace{\frac{(\sup_K R)^{k/2}}{(\inf_K h)^{1/2}}}
                  _{C_{\mathrm{loc}}(K,\delta)}
                          \|u_j-u\|_\mu\longrightarrow0.
\]
This local factor is finite for fixed $K,\delta$; it need not be
uniform as $\delta$ varies. The global convergence to $G$ also
holds after restriction to $K$.
Uniqueness of the $L^2(K)$ limit therefore gives
$G=R^{k/2}u$ on $K$. A compact exhaustion of $M$ yields this
identity almost everywhere on $M$, and consequently
\[
 \int_MR^ku^2\dd=\|G\|_2^2
       =\lim_{j\to\infty}\int_MR^ku_j^2\dd<\infty.
\]
Moreover, graph-core convergence and Lemma~\ref{lem:operator}
give $\mathcal E_{k,\delta}(u_j-u)\to0$, hence
$\mathcal E_{k,\delta}(u_j)\to\mathcal E_{k,\delta}(u)$.
Applying the order-$k$ estimate to each $u_j$ and using these
convergences together with $\|u_j\|_\mu\to\|u\|_\mu$ gives
\begin{equation}\label{eq:lower-order-u}
 \int_M R^ku^2\dd
 \leq C_k\left((t\delta)^{-1}\mathcal E_{k,\delta}(u)
                           +t^k\norm u_\mu^2\right).
\end{equation}
Here and below $C_k$ is determined only by $m,\varepsilon,k$
and the already established order-$k$ constant. The supports of
$u_j$ may vary with $j$. No common compact support is required
for this graph-core argument; the common-support approximation
used earlier serves the different purpose of admitting compactly
supported Lipschitz cutoffs.

Since $h\leq CR^k$ on $\{R\geq\delta\}$, Cauchy--Schwarz gives
\[
 \tau\left|\ip{F\mathbf1_{\{R\geq\delta\}}}{u}_\mu\right|
 \leq C\tau\norm F_\mu
                    \left(\int_M R^ku^2\dd\right)^{1/2}.
\]
This Cauchy--Schwarz estimate is valid also for $k=1$.
The next step, however, uses the already established order-$k$
estimate through \eqref{eq:lower-order-u}.
The induction hypothesis does not apply when $k=1$, since the
family \eqref{eq:hierarchy} starts at order two and provides no
order-one estimate to invoke. We therefore complete the argument
for $k\geq2$ first, and then treat $k=1$ separately below using
the quadratic estimate \eqref{eq:quadratic} from
Proposition~\ref{prop:quadratic}.

For $k\geq2$, \eqref{eq:lower-order-u} and
\eqref{eq:fixed-mass-bounds} now give the following bounds with
one fixed constant $M_k\geq1$.
This constant depends only on $m,\varepsilon,k$ and the already
established order-$k$ constant, independently of
$t,\xi,\delta,\eta$: its value is fixed before choosing $t$ and $\xi$.
\begin{align}
 \tau\left|\ip{F\mathbf1_{\{R\geq\delta\}}}{u}_\mu\right|
 &\leq M_k\left(\sqrt{\frac\xi t}+t^{k/2}\right)
                                      \norm F_\mu^2\\
 &\leq\frac{1-q}{4}\norm F_\mu^2.
                                      \label{eq:high-defect}
\end{align}
An explicit choice giving the last inequality is
\[
 t=\left(\frac{1-q}{8M_k}\right)^{2/k},\qquad
 \xi=t\left(\frac{1-q}{8M_k}\right)^2.
\]
Indeed, $0<t<1$, $\xi>0$, and
$M_kt^{k/2}=M_k\sqrt{\xi/t}=(1-q)/8$.
Thus the two terms together use at most $(1-q)/4$ of the gain.
Both choices depend on an established lower-order constant and
are independent of the constant being proved at order $k+2$.

\smallskip
\noindent\emph{The high-curvature region $\{R\geq\delta\}$ for $k=1$.}
Here $p=k+2=3$, and we keep $P=P_{1,\delta}$ and $\delta>0$
fixed while extending the quadratic estimate to $u$.
Proposition~\ref{prop:quadratic} states that every real $z\in C_c^\infty(M)$ satisfies
\[
 \int_M R^2z^2\dd
 \leq C_{\mathrm{quad}}\int_M R|\nabla z|^2\dd,
 \qquad
 C_{\mathrm{quad}}
 =\frac{(1/2-\varepsilon)^2}{2(m-1)^2\varepsilon^3}.
\]
By Lemma~\ref{lem:mixed-algebra}
\[
 c(R+\delta)\leq h\leq C(R+\delta),\qquad
 D_{1,\delta}\geq chg.
\]
In particular, $R\leq Ch$, and hence
\[
 \int_M R|\nabla z|^2\dd
 \leq C\int_M D_{1,\delta}(\nabla z,\nabla z)\dd
 =C\mathcal E_{1,\delta}(z).
\]
All constants in these comparisons are independent of $\delta$.

To apply the quadratic estimate to the noncompactly supported
resolvent solution $u$, use the graph-core assertion of
Lemma~\ref{lem:operator} to choose real $u_j\in C_c^\infty(M)$
such that
\[
 \|u_j-u\|_\mu+\|Pu_j-Pu\|_\mu\longrightarrow0.
\]
The energy identity \eqref{eq:energy} gives
\[
 \begin{aligned}
 \mathcal E_{1,\delta}(u_j-u)
 &=\ip{u_j-u}{P(u_j-u)}_\mu\\
 &\leq\|u_j-u\|_\mu\|Pu_j-Pu\|_\mu
 \longrightarrow0.
 \end{aligned}
\]
The same argument shows
$\mathcal E_{1,\delta}(u_j-u_\ell)\to0$ as $j,\ell\to\infty$.
Applying \eqref{eq:quadratic} to $z=u_j-u_\ell$ therefore yields
\[
 \begin{aligned}
 \|R(u_j-u_\ell)\|_2^2
 &\leq C_{\mathrm{quad}}
       \int_M R|\nabla(u_j-u_\ell)|^2\dd\\
 &\leq C\mathcal E_{1,\delta}(u_j-u_\ell)
 \longrightarrow0.
 \end{aligned}
\]
Thus $Ru_j$ converges to some $W\in L^2(M,dV_g)$.
For each compact set $K$, positivity of $h$ and boundedness of $R$
on $K$ give
\[
 \|R(u_j-u)\|_{L^2(K)}^2
 \leq\frac{\sup_K R^2}{\inf_K h}\|u_j-u\|_\mu^2
 \longrightarrow0.
\]
The local factor is finite for fixed $K,\delta$; no uniform bound
on it as $\delta$ varies is needed. Uniqueness of the local
$L^2$ limit identifies $W=Ru$ on every compact set, and therefore
on $M$. Moreover, the energy comparison above gives
\[
 \|R^{1/2}(\nabla u_j-\nabla u)\|_2^2
 \leq C\mathcal E_{1,\delta}(u_j-u)\longrightarrow0.
\]
We may consequently pass to the limit in \eqref{eq:quadratic}
with test function $u_j$ and obtain
\begin{equation}\label{eq:cubic-base-u}
 \int_M R^2u^2\dd
 \leq C_{\mathrm{quad}}\int_M R|\nabla u|^2\dd
 \leq C\mathcal E_{1,\delta}(u).
\end{equation}
This argument establishes the required integrability of $Ru$;
it does not assume global integrability of the scalar curvature.

Now set $A_\delta=\{R\geq\delta\}$. On this set,
\eqref{eq:mixed-comparisons} with $k=1$ implies
\[
 h\leq C(R+\delta)\leq CR,\qquad
 \frac{h}{R^2}\leq\frac{C}{R}\leq\frac{C}{\delta}.
\]
It follows that
\[
 \int_{A_\delta}u^2h\dd
 \leq\frac{C}{\delta}\int_M R^2u^2\dd.
\]
Cauchy--Schwarz, \eqref{eq:cubic-base-u}, and the energy bound
in \eqref{eq:fixed-mass-bounds} now give
\begin{align*}
 \tau\left|\ip{F\mathbf1_{A_\delta}}{u}_\mu\right|
 &\leq\tau\|F\|_\mu
             \left(\int_{A_\delta}u^2h\dd\right)^{1/2}\\
 &\leq C\tau\delta^{-1/2}\|F\|_\mu
             \left(\int_M R^2u^2\dd\right)^{1/2}\\
 &\leq C\tau\delta^{-1/2}\|F\|_\mu
                      \mathcal E_{1,\delta}(u)^{1/2}\\
 &\leq\frac{C}{2}\sqrt{\frac{\tau}{\delta}}\|F\|_\mu^2
 =B_1\sqrt\xi\,\|F\|_\mu^2.
\end{align*}
Here we used $\tau=\xi\delta$ from
\eqref{eq:resolvent-choice}; the fixed constant
$B_1=B_1(m,\varepsilon)>0$ depends only on the constants in
Proposition~\ref{prop:quadratic} and Lemma~\ref{lem:mixed-algebra}.
In particular, it is independent of $\xi,\delta$, and $\eta$.
Choosing
\[
 \xi=\left(\frac{1-q}{4\max\{1,B_1\}}\right)^2
\]
therefore gives
\[
 \tau\left|\ip{F\mathbf1_{\{R\geq\delta\}}}{u}_\mu\right|
 \leq\frac{1-q}{4}\|F\|_\mu^2.
\]

For $k\geq2$, we first fix $m,\varepsilon,k$ and the constant
in the already established order-$k$ estimate; these data determine
$M_k$. We then choose $t\in(0,1)$ so that
$M_kt^{k/2}\leq(1-q)/8$. With this $t$ fixed, we choose
$\xi>0$ so that $M_k\sqrt{\xi/t}\leq(1-q)/8$.
For $k=1$, we use the choice of $\xi$ given above, which depends
only on $m,\varepsilon$ through the quadratic estimate and the
mixed coefficient bounds, and no parameter $t$ is needed.
In either case, these choices are independent of $\delta$ and
$\eta$ and use only constants already established, so they do
not depend on the order-$(k+2)$ constant being proved.
We now keep these parameters fixed: for every $\delta>0$ and
every real $\eta\in C_c^\infty(M)$, we set $\tau=\xi\delta$
and obtain
\begin{equation}\label{eq:high-absorbed}
 \tau\left|\ip{F\mathbf1_{\{R\geq\delta\}}}{u}_\mu\right|
                  \leq\frac{1-q}{4}\norm F_\mu^2.
\end{equation}

\par\goodbreak\smallskip
\noindent\emph{The low-curvature region $\{R<\delta\}$.}
We now estimate the low-curvature remainder for every
$1\leq k\leq m-2$, including the cubic base case $k=1$.
By \eqref{eq:mixed-comparisons}, on $\{R<\delta\}$ we have
$a\leq CR\leq C\delta$ and
$h\leq C(R+\delta)^k\leq C\delta^k$.
Since $F=\eta a$, we obtain
\begin{equation}\label{eq:low-source}
 \begin{aligned}
 \norm{F\mathbf1_{\{R<\delta\}}}_\mu^2
   &=\int_{\{R<\delta\}}\eta^2a^2h\dd\\
   &\leq C\delta^k\int_{\{R<\delta\}}\eta^2R^2\dd
    \leq C\delta^{k+2}\int_M\eta^2\dd.
 \end{aligned}
\end{equation}
\goodbreak
Use Cauchy--Schwarz and $\tau\norm u_\mu\leq\norm F_\mu$.
Young's inequality in the form
$xy\leq(1-q)x^2/4+y^2/(1-q)$, with
$x=\norm F_\mu$ and
$y=\norm{F\mathbf1_{\{R<\delta\}}}_\mu$, then gives
\begin{equation}\label{eq:low-defect}
 \begin{aligned}
 \tau\left|\ip{F\mathbf1_{\{R<\delta\}}}{u}_\mu\right|
 &\leq\tau\norm{F\mathbf1_{\{R<\delta\}}}_\mu\norm u_\mu\\
 &\leq\norm{F\mathbf1_{\{R<\delta\}}}_\mu\norm F_\mu\\
 &\leq\frac{1-q}{4}\norm F_\mu^2
       +\frac{1}{1-q}\norm{F\mathbf1_{\{R<\delta\}}}_\mu^2\\
 &\leq\frac{1-q}{4}\norm F_\mu^2
                       +C\delta^{k+2}\int_M\eta^2\dd.
 \end{aligned}
\end{equation}
The two characteristic functions are used only to split integrals;
no derivative is taken of either one.

\par\smallskip
\noindent\emph{The boundary term.}
Finally, \eqref{eq:boundary-energy} and
\eqref{eq:fixed-mass-bounds} first give
\[
 |\mathcal B_{k,\delta}(\eta,u)|
 \leq\frac{C}{\sqrt{\xi\delta}}\|F\|_\mu
             \left(\int_M R^2h|\nabla\eta|^2\dd\right)^{1/2}.
\]
Young's inequality, with coefficient $(1-q)/4$ for $\|F\|_\mu^2$,
therefore implies
\begin{equation}\label{eq:boundary-absorbed}
 |\mathcal B_{k,\delta}(\eta,u)|
 \leq\frac{1-q}{4}\norm F_\mu^2
                 +\frac C\delta\int_M R^2h|\nabla\eta|^2\dd.
\end{equation}
The constant in \eqref{eq:boundary-absorbed} is allowed to depend
on $\xi$ and $1-q$. Since $\xi$ has already been fixed by
the stated data, this is still a constant depending only on
$m,\varepsilon,k$ and the previously established lower-order
constant, never on $\delta$ or $\eta$.
\par\smallskip
\noindent\emph{Absorption and completion of the induction.}
Insert \eqref{eq:high-absorbed}, \eqref{eq:low-defect}, and
\eqref{eq:boundary-absorbed} into \eqref{eq:defect-start}.
Absorbing the three quarters of the strict gain yields
\[
 \norm F_\mu^2
 \leq\frac C\delta\int_M R^2h|\nabla\eta|^2\dd
                      +C\delta^{k+2}\int_M\eta^2\dd.
\]
Since
\[
 \norm F_\mu^2=\int_M\eta^2a^2h\dd
       \geq c\int_M\eta^2R^{k+2}\dd,
 \qquad R^2h\leq C(R+\delta)^{k+2},
\]
this is \eqref{eq:hierarchy} at order $k+2$.
In particular, for $k=1$, \eqref{eq:mixed-comparisons} gives
$a^2h\geq cR^2(R+\delta)\geq cR^3$ and
$R^2h\leq CR^2(R+\delta)\leq C(R+\delta)^3$.
Thus the preceding estimate explicitly yields
\[
 \int_M\eta^2R^3\dd
 \leq C_{m,\varepsilon,3}\left(
 \delta^{-1}\int_M(R+\delta)^3|\nabla\eta|^2\dd
 +\delta^3\int_M\eta^2\dd\right).
\]
This is the cubic base case $p=3$.
Strong induction then establishes every order through $m$:
the even orders start at two,
and the odd orders start at three. The ambient complex dimension
has remained $m$ throughout, so no assertion in a lower dimension
has been used as an induction hypothesis.
At each order all auxiliary parameters have been fixed using
only previously established constants, so the resulting
$C_{m,\varepsilon,p}$ is uniform for all $\delta>0$ and all
compactly supported tests.
\end{proof}

\begin{rem}\label{rem:curvature-split}
The decomposition into $\{R\geq\delta\}$ and $\{R<\delta\}$ in
the proof of Proposition~\ref{prop:hierarchy} compares the scalar
curvature with the regularization scale $\delta$. Its purpose is
to obtain a sufficiently small coefficient for the resolvent
remainder in \eqref{eq:defect-start}. Indeed, Cauchy--Schwarz and
\eqref{eq:fixed-mass-bounds} alone give
\[
 \tau|\ip Fu_\mu|
 \leq\tau\norm F_\mu\norm u_\mu
 \leq\norm F_\mu^2.
\]
The coefficient $1$ cannot be strictly absorbed into $1-q$.
Moreover, the factor $\tau$ cancels in this estimate, so choosing
$\tau=\xi\delta$ with small $\xi$ yields no improvement in this bound.

On $\{R\geq\delta\}$, the comparison $h\leq CR^k$ permits us
to control the weighted norm of $u$ using a previously established
curvature estimate. For $k\geq2$ this is the order-$k$ estimate
\eqref{eq:lower-order-u}; for $k=1$ we instead use the quadratic
estimate of Proposition~\ref{prop:quadratic}, extended to $u$ in
\eqref{eq:cubic-base-u}. Choosing the auxiliary parameters then
gives the small coefficient in \eqref{eq:high-absorbed}.

On $\{R<\delta\}$, we control the source $F=\eta a$ directly.
The bounds $a\leq C\delta$ and $h\leq C\delta^k$ imply
\[
 \norm{F\mathbf1_{\{R<\delta\}}}_\mu^2
 =\int_{\{R<\delta\}}\eta^2a^2h\dd
 \leq C\delta^{k+2}\int_M\eta^2\dd.
\]
The basic resolvent bound and Young's inequality therefore give
\eqref{eq:low-defect} for every $1\leq k\leq m-2$, including
$k=1$, without invoking a lower-order induction hypothesis.
The resulting error is already allowed on the right-hand side
of \eqref{eq:hierarchy}.

Combining \eqref{eq:high-absorbed} and \eqref{eq:low-defect} yields
\[
 \tau|\ip Fu_\mu|
 \leq\frac{1-q}{2}\norm F_\mu^2
       +C\delta^{k+2}\int_M\eta^2\dd.
\]
Thus the remainder uses only half of the absorption gain in
\eqref{eq:defect-start}, leaving room for the boundary estimate
\eqref{eq:boundary-absorbed}. All constants are independent of
$\delta$ and $\eta$. A direct treatment of the full inner product
would need an estimate of this strength.
\end{rem}

\section{Proof of the main theorem}\label{sec:proof}

With Proposition~\ref{prop:hierarchy} established, we now prove
Theorem~\ref{thm:main}. We first obtain $R\in L^m(M)$
and then conclude by an annular cutoff.

\begin{proof}[Proof of Theorem~\ref{thm:main}]
Corollary~\ref{cor:kahler-surface} already establishes
Theorem~\ref{thm:main} in the case $m=2$, it remains to consider
a noncompact $M$ with $m\geq3$. Write
$I_m(r)=\int_{B(o,r)}R^m\dd$. These quantities are finite for
finite $r$, since closed bounded balls are compact and $R$ is smooth.

For $0<s<t$, take $\eta=\eta_{s,t}$ from \eqref{eq:cutoff}.
Proposition~\ref{prop:compact-approximation} extends
\eqref{eq:hierarchy} to this Lipschitz test by strong $W^{1,2}$
approximation with a common compact support: for fixed $\delta$,
all coefficients in the weighted integrals are bounded on that
compact set. We may then use $0\leq\eta\leq1$, $\eta=1$ on
$B(o,s)$, and $|\nabla\eta|\leq(t-s)^{-1}$ almost everywhere.
Applying \eqref{eq:hierarchy} with $p=m$ gives, for every $\delta>0$,
\begin{equation}\label{eq:ball-recursion}
 \begin{aligned}
 I_m(s)&\leq\int_M\eta^2R^m\dd\\
 &\leq\frac{L_m}{\delta(t-s)^2}
                    \int_{B(o,t)}(R+\delta)^m\dd
                          +L_m\delta^mV(t)\\
 &\leq\frac{K_0}{\delta(t-s)^2}I_m(t)
       +K_1\left(\frac{\delta^{m-1}}{(t-s)^2}+\delta^m\right)V(t).
 \end{aligned}
\end{equation}
Here $L_m\geq1$ is a valid constant in \eqref{eq:hierarchy}
at order $p=m$, and the constants in the last line are
\[
 K_0=2^{m-1}L_m,\qquad K_1=(2^{m-1}+1)L_m.
\]
They depend only on $m,\varepsilon$ and are independent of
$s,t,\delta$, and the cutoff. The last line follows from
$(R+\delta)^m\leq2^{m-1}(R^m+\delta^m)$; the term $\delta^m$
in this expansion contributes
$2^{m-1}L_m\delta^{m-1}(t-s)^{-2}V(t)$.
We use these same constants throughout the proof.

To obtain a recursion suitable for iteration, take
$\delta=A(t-s)^{-2}$, where $A>0$ will be specified below.
Substituting this scale into \eqref{eq:ball-recursion} gives
\begin{equation}\label{eq:contracting-recursion}
 \begin{aligned}
 I_m(s)
 &\leq\frac{K_0}{A}I_m(t)
       +K_1(A^{m-1}+A^m)(t-s)^{-2m}V(t)\\
 &=\vartheta I_m(t)+C_A(t-s)^{-2m}V(t),
 \end{aligned}
\end{equation}
where we have written
\[
 \vartheta=\frac{K_0}{A},\qquad
 C_A=K_1(A^{m-1}+A^m).
\]
The iteration below requires $\vartheta2^{2m}<1$.
We therefore choose
\[
 A=2^{2m+1}K_0.
\]
With this choice, $\vartheta=2^{-(2m+1)}$ and
$\vartheta2^{2m}=1/2<1$. The constants $A,\vartheta$, and $C_A$
are now fixed and depend only on $m,\varepsilon$.
Although $C_A$ depends on $A$, its value is not used to choose
$A$; that choice uses only $K_0$.
For a fixed $r>0$, set $s_j=2r-r2^{-j}$, $j\geq0$.
Then $s_0=r$, $s_j\uparrow2r$, and
$s_{j+1}-s_j=r2^{-j-1}$. The corresponding scales are
\[
 \delta_j=A(s_{j+1}-s_j)^{-2}
          =Ar^{-2}2^{2j+2}\longrightarrow\infty.
\]
Every application is valid because \eqref{eq:hierarchy} holds
uniformly for all $\delta>0$, with no smallness requirement.
Iterating \eqref{eq:contracting-recursion} inside this fixed
outer ball gives
\[
 I_m(r)\leq\vartheta^NI_m(s_N)
 +C_A r^{-2m}V(2r)
             \sum_{j=0}^{N-1}\vartheta^j2^{2m(j+1)}.
\]
For this fixed $r$, local smoothness and compactness of the closed
outer ball give $I_m(s_N)\leq I_m(2r)<\infty$ independently of
$N$. Hence $\vartheta^NI_m(s_N)\to0$ as $N\to\infty$.
Only this local finiteness is used in taking the iteration limit.
Moreover,
\[
 \sum_{j=0}^{\infty}\vartheta^j2^{2m(j+1)}
       =\frac{2^{2m}}{1-\vartheta2^{2m}}=2^{2m+1}.
\]
Since $A,C_A,\vartheta$ have been fixed using only $m,\varepsilon$, together with Bishop-Gromov volume comparison \eqref{eq:volume} now gives a bound independent of $r$
\[ I_m(r)\leq C r^{-2m}V(2r)\leq \omega_{2m}C,\]
Monotone convergence as $r\to\infty$ then yields global integrability for $R^m$:
\begin{equation}\label{eq:critical-integrability}
 \int_MR^m\dd<\infty.
\end{equation}
The iteration used only finiteness on the one compact outer ball;
global integrability is its conclusion.

\goodbreak
To force this finite integral to vanish, apply
\eqref{eq:hierarchy} again, now with $\eta=\eta_{r,2r}$ and
$\delta=br^{-2}$, where $b>0$ is arbitrary but fixed.
Set $A_r=\overline{B(o,2r)}\setminus B(o,r)$.
The gradient vanishes almost everywhere outside $A_r$ and
satisfies $|\nabla\eta|\leq r^{-1}$. The choice of $\delta$ gives
\[
 \frac1{\delta r^2}=\frac1b,\qquad
 \frac{\delta^{m-1}}{r^2}=b^{m-1}r^{-2m},\qquad
 \delta^m=b^mr^{-2m}.
\]
Using these identities and the same power expansion, we obtain
\begin{equation}\label{eq:annular}
 \begin{aligned}
 I_m(r)&\leq\frac{L_m}{\delta r^2}
                \int_{A_r}(R+\delta)^m\dd+L_m\delta^mV(2r)\\
 &\leq\frac{K_0}{b}\int_{A_r}R^m\dd
       +K_1\bigl(b^{m-1}+b^m\bigr)r^{-2m}V(2r)\\
 &\leq\frac{K_2}{b}
       \int_{\overline{B(o,2r)}\setminus B(o,r)}R^m\dd
                    +K_3\bigl(b^{m-1}+b^m\bigr),
 \end{aligned}
\end{equation}
where
\[
 K_2=K_0,\qquad K_3=2^{2m}\omega_{2m}K_1.
\]
The second line uses $(R+\delta)^m\leq2^{m-1}(R^m+\delta^m)$;
the last uses Bishop-Gromov volume comparison $V(2r)\leq\omega_{2m}(2r)^{2m}$.
In particular, $K_2,K_3$ are independent of both $b$ and $r$.
Keep $b>0$ fixed at this stage. The global integrability established
in \eqref{eq:critical-integrability} now gives
\[
 0\leq\int_{\overline{B(o,2r)}\setminus B(o,r)}R^m\dd
 \leq\int_{M\setminus B(o,r)}R^m\dd\longrightarrow0
 \qquad(r\to\infty).
\]
Thus $K_2/b$ is fixed when the tail tends to zero.
Monotone convergence on the left of \eqref{eq:annular} gives
\[
 \int_MR^m\dd\leq K_3\bigl(b^{m-1}+b^m\bigr).
\]
This inequality holds for every fixed $b>0$ with the same $K_3$.
Only after completing the limit $r\to\infty$ do we let
$b\downarrow0$.
Since $m\geq2$, both powers on the right vanish. Hence $R=0$
almost everywhere, and smoothness gives $R\equiv0$.
The nonnegative Ricci tensor has zero trace and therefore vanishes
identically. This proves the alternative in the Theorem~\ref{thm:main}.\qedhere
\end{proof}

\appendix
\section{Compactly supported Sobolev approximation}\label{app:approximation}

The following local approximation result justifies the nonsmooth
cutoffs and weak tests used in the proof. Completeness is needed
to make the distance cutoffs compactly supported, but not for the
approximation result itself. All manifolds in this appendix have
no boundary.

\begin{proposition}\label{prop:compact-approximation}
Let $f\in W^{1,2}(M)$ have compact support $K$. There are a compact
set $K'\subset M$ and functions $f_j\in C_c^\infty(M)$ such that
\[
 \operatorname{supp}f_j\subset K',\qquad
 \norm{f_j-f}_{L^2(dV_g)}+
 \norm{\nabla f_j-\nabla f}_{L^2(dV_g)}\longrightarrow0.
\]
If $f$ is real and nonnegative, the $f_j$ can be chosen
nonnegative. If $f$ is Lipschitz, they can also be chosen to
converge uniformly to $f$.
For every nonnegative locally bounded function $a$ and locally
bounded positive semidefinite symmetric tensor $B$,
\[
 \begin{aligned}
 &\int_M a|f_j-f|^2\dd
   +\int_M B(\nabla(f_j-f),\nabla\overline{f_j-f})\dd\longrightarrow0,\\
 &\int_M a|f_j|^2\dd\longrightarrow\int_M a|f|^2\dd,\\
 &\int_M B(\nabla f_j,\nabla\bar f_j)\dd
       \longrightarrow\int_M B(\nabla f,\nabla\bar f)\dd.
 \end{aligned}
\]
\end{proposition}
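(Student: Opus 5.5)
The plan is the standard mollification-plus-partition-of-unity argument, carried out locally so that all the constants live on one compact set.

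\emph{Step 1: localize.} Cover $K$ by finitely many coordinate charts $U_1,\dots,U_N$ with relatively compact closures. Take a smooth partition of unity $\{\psi_i\}$ subordinate to this cover, with $\sum_i\psi_i=1$ on a neighbourhood of $K$. Then $f=\sum_i\psi_if$, and each $\psi_if$ lies in $W^{1,2}$ with compact support inside $U_i$. On $\overline{U_i}$ the metric coefficients, their inverses and $\sqrt{\det g}$ are bounded above and below. Hence the Riemannian $W^{1,2}$ norm and the Euclidean one are comparable in each chart, with constants depending only on the chart.

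\emph{Step 2: mollify.} In each chart, mollify $\psi_if$ by a standard nonnegative mollifier $\varphi_\epsilon$, with $\epsilon$ smaller than the distance from $\operatorname{supp}\psi_i$ to $\partial U_i$. This gives smooth functions supported in a fixed compact subset of $U_i$. They converge to $\psi_if$ in Euclidean $W^{1,2}$, hence in Riemannian $W^{1,2}$. Summing over $i$ with $\epsilon=1/j$ produces $f_j\in C_c^\infty(M)$ supported in
\[
K'=\bigcup_i\{x\in U_i:\ \operatorname{dist}(x,\operatorname{supp}\psi_i)\le\epsilon_0\},
\]
which is compact and independent of $j$.

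\emph{Step 3: sign and uniform convergence.} If $f\ge0$, then each $\psi_if\ge0$, and convolution with a nonnegative kernel preserves sign, so $f_j\ge0$. If $f$ is Lipschitz, each $\psi_if$ is uniformly continuous in its chart, so mollification converges uniformly there. The finite sum then converges uniformly to $f$.

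\emph{Step 4: weighted integrals.} Since $a$ and $B$ are locally bounded, $a\le C_{K'}$ and $B\le C_{K'}g$ on $K'$. Everything is supported in $K'$, so
\[
\int_M a|f_j-f|^2+\int_M B(\nabla(f_j-f),\nabla\overline{f_j-f})\le C_{K'}\bigl(\|f_j-f\|_2^2+\|\nabla(f_j-f)\|_2^2\bigr)\to0.
\]
For the convergence of the quadratic quantities, write
\[
\Bigl|\,\|u\|_{a}^2-\|w\|_{a}^2\,\Bigr|\le\|u-w\|_{a}\bigl(\|u\|_{a}+\|w\|_{a}\bigr).
\]
Here $\|\cdot\|_a$ is the seminorm $\bigl(\int a|\cdot|^2\bigr)^{1/2}$, or $\bigl(\int B(\nabla\cdot,\nabla\bar\cdot)\bigr)^{1/2}$ in the gradient case. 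Both are finite on $K'$, and $B\ge0$ is what makes the gradient version a genuine seminorm.

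The only delicate point is Step 2: the supports must stay in the fixed compact set $K'$, and one must use that on a relatively compact chart the Riemannian and Euclidean gradients and measures are uniformly comparable. Choosing $\epsilon_0$ below the distance from each $\operatorname{supp}\psi_i$ to $\partial U_i$ handles both. Everything else is bookkeeping with the local bounds.
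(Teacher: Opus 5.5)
Your proposal is correct and follows the paper's proof essentially step for step: a finite relatively compact chart cover with a partition of unity equal to one near $K$, mollification of the coordinate pieces $\psi_i f$ with supports kept in a fixed compact $K'$, sign and uniform-convergence preservation under a nonnegative mollifier, and the bounds on $a$ and $B$ over $K'$ combined with the difference-of-squares estimate for the seminorms. The only care needed, which the paper also takes, is that metric comparability is used on compact subsets of the charts (or on $\overline{U_i}$ when the charts extend past their closures).
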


\begin{proof}
Choose finitely many relatively compact coordinate charts
$(U_i,\kappa_i)$ covering $K$, and smooth nonnegative functions
$\psi_i\in C_c^\infty(U_i)$ with $\sum_i\psi_i=1$ on a
neighborhood of $K$. Such functions are obtained from a smooth
partition of unity subordinate to these charts and $M\setminus K$;
the latter term vanishes near $K$. Consequently,
$f=\sum_i\psi_if$ almost everywhere.
For each $i$, the function $\psi_if$ has support compactly contained
in $U_i$. Its coordinate representative, extended by zero, is a
function $F_i\in W^{1,2}(\mathbb R^n)$ with compact support in
$\kappa_i(U_i)$, where $n=\dim_{\mathbb R}M$.
There is no boundary contribution to its weak derivative because
it vanishes in a neighborhood of the boundary of its chart.
On the fixed compact subsets under consideration, the metric
coefficients and volume density are uniformly comparable with
the Euclidean ones; coordinate and Riemannian $W^{1,2}$ norms
are therefore equivalent there.

Let $\varrho\in C_c^\infty(B_{\mathbb R^n}(0,1))$ be nonnegative
with integral one, and put
$\varrho_\epsilon(x)=\epsilon^{-n}\varrho(x/\epsilon)$.
Fix compact sets $K_i'$ whose interiors contain
$\operatorname{supp}F_i$ and which are contained in $\kappa_i(U_i)$.
For all sufficiently small $\epsilon>0$,
\[
 F_{i,\epsilon}:=\varrho_\epsilon*F_i\in C_c^\infty(\mathbb R^n),
 \qquad \operatorname{supp}F_{i,\epsilon}\subset K_i'.
\]
The usual convolution identity for weak derivatives gives
$\partial_aF_{i,\epsilon}=\varrho_\epsilon*(\partial_aF_i)$.
Translation continuity in $L^2(\mathbb R^n)$, applied to $F_i$
and each $\partial_aF_i$, proves
\[
 F_{i,\epsilon}\longrightarrow F_i
                   \qquad\text{in }W^{1,2}(\mathbb R^n).
\]
For example, this follows directly from
\[
 \norm{\varrho_\epsilon*G-G}_2
 \leq\int_{\mathbb R^n}\varrho(y)
          \norm{G(\,\cdot-\epsilon y)-G}_2\,dy\longrightarrow0
 \qquad(G\in L^2(\mathbb R^n)),
\]
using the bound $2\norm G_2\varrho(y)$ and dominated convergence.
Pull back $F_{i,\epsilon}$ to $U_i$ and extend by zero on $M$.
These pullbacks are smooth, since their supports lie strictly
inside their charts. Their sum, denoted by $f_\epsilon$, has
support in the fixed compact set
$K'=\bigcup_i\kappa_i^{-1}(K_i')$ and converges to $f$ in
$W^{1,2}(M)$ by the norm equivalence and the finite sum.
Choose any sufficiently small sequence $\epsilon_j\downarrow0$.
This gives the required $f_j$.

If $f\geq0$, every $F_i$ and its convolution are nonnegative,
so $f_j\geq0$. If $f$ is Lipschitz, each zero-extended $F_i$ is
continuous with compact support and hence uniformly continuous.
Thus $\varrho_\epsilon*F_i\to F_i$ uniformly; the finite sum
proves uniform convergence on $M$.

Finally, $a$ and the operator norm of $B$ are bounded on $K'$.
The first weighted convergence follows immediately from strong
$W^{1,2}$ convergence. For the quadratic limits, for example,
\[
 \left|\int_Ma(|f_j|^2-|f|^2)\dd\right|
 \leq\norm a_{L^\infty(K')}
       \norm{f_j-f}_2(\norm{f_j}_2+\norm f_2)\longrightarrow0.
\]
The identical argument with the bounded field $B^{1/2}$ and the
gradients proves the last limit.
\end{proof}

\begin{corollary}\label{cor:weak-tests}
Let $D,w$ be smooth coefficients as in Lemma~\ref{lem:operator}.
If $z\in W^{1,2}_{\mathrm{loc}}(M)$ satisfies
$-\Div(D\nabla z)+wz=0$ weakly, then
\[
 \int_M D(\nabla z,\nabla\bar\phi)\dd
       +\int_M wz\bar\phi\dd=0
\]
for every compactly supported $\phi\in W^{1,2}(M)$.
In particular, if $\chi$ is a compactly supported Lipschitz
function and $z$ is smooth, the test $\phi=\chi^2z$ is admissible.
\end{corollary}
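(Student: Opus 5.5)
The plan is to extend the weak formulation from smooth compactly supported tests to compactly supported $W^{1,2}$ tests by density, using Proposition~\ref{prop:compact-approximation}. Since $z$ is a weak solution, by definition
\[
 \int_M D(\nabla z,\nabla\bar\phi_j)\dd+\int_M wz\bar\phi_j\dd=0
\]
holds for every $\phi_j\in C_c^\infty(M)$. Given a compactly supported $\phi\in W^{1,2}(M)$, we apply Proposition~\ref{prop:compact-approximation} to $\phi$. It gives $\phi_j\in C_c^\infty(M)$, all supported in one fixed compact set $K'$, with $\phi_j\to\phi$ and $\nabla\phi_j\to\nabla\phi$ in $L^2(dV_g)$. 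For a complex $\phi$ we may either apply the proposition as stated, since it allows complex $f$, or approximate the real and imaginary parts separately.

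The passage to the limit is done on $K'$ alone. There $D$ and $w$ are smooth, hence bounded, and $z\in W^{1,2}_{\mathrm{loc}}$ gives $z,\nabla z\in L^2(K')$. Cauchy--Schwarz then yields
\[
 \Bigl|\int_M D(\nabla z,\nabla\overline{(\phi_j-\phi)})\dd\Bigr|
 \le \sup_{K'}|D|_{\mathrm{op}}\,\norm{\nabla z}_{L^2(K')}\norm{\nabla\phi_j-\nabla\phi}_{L^2},
\]
and the analogous bound holds for the zeroth-order term with $\sup_{K'}w$. Both differences tend to zero, so the identity holds for $\phi$.

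For the particular claim, let $\chi$ be Lipschitz with compact support and $z$ smooth. Then $\phi=\chi^2z$ is Lipschitz with compact support. By Rademacher's theorem, together with the product rule for Lipschitz times smooth functions, $\phi\in W^{1,2}(M)$ with $\nabla\phi=2\chi z\nabla\chi+\chi^2\nabla z$ almost everywhere, and this weak gradient is bounded. The first part therefore applies. The only point needing care is the identification of the weak gradient of a Lipschitz function with its a.e.\ gradient, which is a standard local fact in coordinate charts. The common compact support provided by the approximation proposition is what removes any need for global integrability of $z$, so I do not expect a genuine obstacle.
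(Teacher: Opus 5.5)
Your proposal is correct and follows essentially the same route as the paper: you approximate $\phi$ by Proposition~\ref{prop:compact-approximation} with a common compact support $K'$, pass both integrals to the limit using boundedness of $D,w$ on $K'$, $z,\nabla z\in L^2(K')$ and Cauchy--Schwarz, and obtain admissibility of $\chi^2z$ from the weak product rule. Your write-up simply spells out the estimates that the paper states in one line.
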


\begin{proof}
Use Proposition~\ref{prop:compact-approximation} to approximate
$\phi$ by smooth functions with a common compact support $K'$.
On $K'$, the coefficients are bounded and $z,\nabla z$ are in
$L^2$, so Cauchy--Schwarz passes both integrals to the limit.
The final assertion follows from the weak product rule.
\end{proof}

For clarity, the same argument passes every cutoff integral used
in \eqref{eq:quadratic} and \eqref{eq:hierarchy} to the limit:
for fixed regularization parameters, their weights are locally
bounded functions of the smooth scalar curvature.
After passage to the limit one uses the gradient bound of the
Lipschitz function itself. No common pointwise gradient bound
for its smooth approximations is required. In particular, even
if these approximations extend slightly outside $B(o,t)$, the
limit gradient of $\eta_{s,t}$ vanishes almost everywhere outside
the closed annulus
$\overline{B(o,t)}\setminus B(o,s)$, as needed in
\eqref{eq:annular}. The proposition makes no assertion about
second derivatives or convergence in an operator graph norm.

\section{Mixed Ricci algebra and the Hodge--Riemann relation}\label{app:mixed-hr}

We first give the detailed calculations for
\eqref{eq:mixed-expansion}--\eqref{eq:D-formula} in
Lemma~\ref{lem:mixed-algebra}, then recall the linear
Hodge--Riemann relation and prove its uniform consequence.
All calculations are pointwise; closedness enters separately
in the divergence and energy arguments of
Section~\ref{sec:mixed-ricci}.

\subsection*{Coefficient calculations for the mixed pairing}
Fix $1\leq k\leq m-2$ and $\delta>0$. At a point choose the
Ricci eigenframe \eqref{eq:eigenvalues}, and write
\[
 A=\rho+\delta\omega=\sum_i\ell_i\alpha_i,\qquad
 \ell_i=\lambda_i+\delta,\qquad
 dV_g=\alpha_1\wedge\cdots\wedge\alpha_m.
\]
For an increasing set of indices $I$, write
$\alpha_I=\bigwedge_{i\in I}\alpha_i$ and
$\ell_I=\prod_{i\in I}\ell_i$. Denote elementary symmetric
polynomials by $\sigma_k(\ell)=\sum_{|I|=k}\ell_I$, with
$\sigma_0=1$; a vertical bar lists the variables omitted.
Since $\alpha_i^2=0$ and two-forms
commute under wedge product,
\[
 A^k=k!\sum_{|I|=k}\ell_I\alpha_I,\qquad
 \frac{\omega^{m-k-2}}{(m-k-2)!}
                  =\sum_{|J|=m-k-2}\alpha_J.
\]
For fixed $i<j$, a term contributing to the complement of
$\{i,j\}$ chooses $I\subset\{1,\ldots,m\}\setminus\{i,j\}$
of size $k$, after which $J$ is uniquely determined. Consequently,
\[
 \Theta_{k,\delta}
    =k!\sum_{i<j}\sigma_k(\ell\mid i,j)\alpha_{\widehat{i,j}},
\]
where $\alpha_{\widehat{i,j}}$ is the ordered product over all
indices other than $i,j$.

Use the unitary coframe
$\zeta_i=(\theta_i+\mathrm i\psi_i)/\sqrt2$, so that
$\alpha_i=\mathrm i\zeta_i\wedge\bar\zeta_i$, and write
\[
 U=\mathrm i\sum_{i,j}U_{i\bar j}\zeta_i\wedge\bar\zeta_j,
 \qquad
 V=\mathrm i\sum_{i,j}V_{i\bar j}\zeta_i\wedge\bar\zeta_j.
\]
The matrices of real $(1,1)$-forms are Hermitian.
The coefficient of $\alpha_i\wedge\alpha_j$ in the restriction
of $U\wedge V$ to these two complex directions is
\[
 U_{i\bar i}V_{j\bar j}+U_{j\bar j}V_{i\bar i}
       -U_{i\bar j}V_{j\bar i}-U_{j\bar i}V_{i\bar j}.
\]
The minus signs follow from
$(\mathrm i\zeta_i\wedge\bar\zeta_j)
 \wedge(\mathrm i\zeta_j\wedge\bar\zeta_i)
 =-\alpha_i\wedge\alpha_j$.
Hermitian symmetry turns the last two terms into
$-2\operatorname{Re}(U_{i\bar j}\overline{V_{i\bar j}})$.
Wedging with the complementary factors therefore gives
\begin{equation}\label{eq:app-mixed-expansion}
 \begin{aligned}
 Q(U,V)&=k!\sum_{i<j}\sigma_k(\ell\mid i,j)\\
 &\quad\cdot\bigl(U_{i\bar i}V_{j\bar j}
        +U_{j\bar j}V_{i\bar i}
        -2\operatorname{Re}(U_{i\bar j}\overline{V_{i\bar j}})\bigr).
 \end{aligned}
\end{equation}

For $U=V=\omega$, the diagonal matrix entries equal one and
the off-diagonal entries vanish. Thus
$h=2k!\sum_{i<j}\sigma_k(\ell\mid i,j)$.
Each monomial $\ell_I$ in $\sigma_k(\ell)$ occurs once for
every pair $\{i,j\}$ outside $I$, namely
$\binom{m-k}{2}$ times. Hence
\begin{equation}\label{eq:app-h-formula}
 h=2k!\binom{m-k}{2}\sigma_k(\ell)
   =k!(m-k)(m-k-1)\sigma_k(\ell).
\end{equation}

Finally, setting $V=\omega$ in \eqref{eq:app-mixed-expansion}
and collecting the coefficient of $U_{i\bar i}$ gives
\begin{equation}\label{eq:app-D-formula}
 \begin{aligned}
 Q(U,\omega)&=\sum_iD_iU_{i\bar i},\\
 D_i&=k!\sum_{j\ne i}\sigma_k(\ell\mid i,j)
       =k!(m-k-1)\sigma_k(\ell\mid i),\qquad
 \sum_iD_i=h.
 \end{aligned}
\end{equation}
Indeed, every $k$-element set $I$ avoiding $i$ is counted for
exactly $m-k-1$ choices of $j\notin I\cup\{i\}$.
Equivalently,
\[
 \omega\wedge\Theta_{k,\delta}
       =\sum_iD_i\alpha_{\widehat i}.
\]
Since the $\alpha_i$ coefficient of $df\wedge d^cu$ is
$(e_if)(e_iu)+(Je_if)(Je_iu)$, this also identifies the tensor
in Lemma~\ref{lem:mixed-algebra} as having eigenvalue $D_i$ on
both $e_i$ and $Je_i$.

\subsection*{Primitive forms and the Hodge--Riemann relation}
We now recall the linear relation on a Hermitian vector space;
no global compactness assumption on $M$ is involved.

Let $W$ have complex dimension $m\geq2$, with Hermitian form
$\omega$ and volume $dV=\omega^m/m!$. Let
$\omega_1,\ldots,\omega_{m-2},\chi$ be strictly positive real
$(1,1)$-forms, and set
\[
 \Omega=\omega_1\wedge\cdots\wedge\omega_{m-2},\qquad
 Q_\Omega(U,V)\,dV=U\wedge V\wedge\Omega.
\]
For $m=2$, the empty product $\Omega$ is $1$.
The primitive subspace relative to $(\Omega,\chi)$ is
\[
 \mathcal P_{\Omega,\chi}
 =\{U\in\Lambda^{1,1}_{\mathbb R}:
          U\wedge\chi\wedge\Omega=0\}
 =\{U:Q_\Omega(U,\chi)=0\}.
\]

\begin{theorem}[Linear mixed Hodge--Riemann relation]\label{thm:linear-mixed-hr}
With the preceding strictly positive factors, $-Q_\Omega$ is
positive definite on $\mathcal P_{\Omega,\chi}$:
\[
 -Q_\Omega(U,U)>0
 \qquad\text{for every nonzero }U\in\mathcal P_{\Omega,\chi}.
\]
\end{theorem}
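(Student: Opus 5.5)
Since $\chi\wedge\Omega$ is strictly positive, the map $U\mapsto Q_\Omega(U,\chi)$ is a nonzero linear functional ($Q_\Omega(\chi,\chi)>0$). Hence $\Lambda^{1,1}_{\mathbb R}=\mathbb R\chi\oplus\mathcal P_{\Omega,\chi}$, and $\mathcal P_{\Omega,\chi}$ has codimension one. The statement is therefore equivalent to the \emph{mixed Hodge index property}: the quadratic form $Q_\Omega$ on $\Lambda^{1,1}_{\mathbb R}$ has signature $(1,N-1)$, where $N=m^2$, and it is positive on $\chi$. Indeed, if the signature is $(1,N-1)$ and $Q_\Omega(\chi,\chi)>0$, then the $Q_\Omega$-orthogonal complement of $\chi$ is negative definite, and this complement is exactly $\mathcal P_{\Omega,\chi}$. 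So I will prove nondegeneracy together with the signature claim. Notably, the answer does not depend on $\chi$.

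The proof is by induction on $m$, with a deformation argument at each stage. For $m=2$, $\Omega=1$, and $Q(U,U)\,dV=U\wedge U$. In a unitary frame diagonalizing nothing in particular, $U\wedge U=2(U_{1\bar1}U_{2\bar2}-|U_{1\bar2}|^2)\,dV$, which is $2\det$ of the Hermitian $2\times2$ matrix. This form on the $4$-dimensional space of Hermitian matrices has signature $(1,3)$, a Lorentzian form, which settles the base case. In general, the set of tuples $(\omega_1,\dots,\omega_{m-2})$ of strictly positive forms is convex, hence connected, and it contains $(\omega,\dots,\omega)$. Along a path, the signature of $Q_\Omega$ stays constant as long as $Q_\Omega$ remains nondegenerate. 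At $\omega_i=\omega$ the classical Lefschetz decomposition gives $\Lambda^{1,1}=\mathbb R\omega\oplus P^{1,1}$, where on primitive $U$ one has $U\wedge U\wedge\omega^{m-2}=-c|U|^2dV$ with $c>0$. This is the classical Hodge--Riemann relation, which can also be checked directly from the coefficient formula analogous to \eqref{eq:app-mixed-expansion} with all $\ell_i=1$. At this base point the signature is therefore $(1,N-1)$. So everything reduces to the following claim: \emph{$Q_\Omega$ is nondegenerate for every tuple of strictly positive forms.}

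Nondegeneracy is the main obstacle. I would prove it by induction on $m$ in the style of Timorin's linear-algebra proof of mixed Hodge--Riemann. Suppose $U\neq0$ satisfies $U\wedge V\wedge\Omega=0$ for all $V$. Taking $V=\omega_{m-2}$ places $U$ in the primitive space relative to $(\omega_1,\dots,\omega_{m-3})$ and $\omega_{m-2}$. Now restrict to hyperplanes: for each complex hyperplane $H\subset W$, or more conveniently after simultaneously diagonalizing $\omega_{m-2}$ against $\omega$, one uses the identity expressing $\omega_{m-2}$ as a positive combination of currents of integration over hyperplanes, $\omega_{m-2}=\int[H]\,d\nu(H)$ in the linear setting. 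Then $Q_\Omega(U,U)=\int Q^{H}_{\Omega'}(U|_H,U|_H)\,d\nu$, where $\Omega'$ is built from the restrictions of $\omega_1,\dots,\omega_{m-3}$. Using the induction hypothesis (signature $(1,\cdot)$ on each $H$) together with primitivity, one shows that $U|_H$ is negative for the form on $H$ unless $U|_H$ is proportional to $\omega_{m-2}|_H$. The condition $U\wedge V\wedge\Omega=0$ forces $Q_\Omega(U,U)=0$, hence every $U|_H$ is degenerate-negative, so $U|_H=0$ for almost all $H$, and therefore $U=0$. The delicate points are constructing the positive hyperplane decomposition of a strictly positive $(1,1)$-form and handling the equality case carefully. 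If that route proves cumbersome, I would instead cite the known linear mixed Hodge--Riemann theorem (Timorin; Dinh--Nguyên), since the statement is purely linear algebraic and is only being recalled here.
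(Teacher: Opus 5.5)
The paper does not prove this statement. It quotes it as the real $(1,1)$ case of Timorin's linear mixed Hodge--Riemann theorem (see also Dinh--Nguyen, Proposition~2.1), which is your fallback.

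Your attempt at a self-contained proof is structured correctly:
\begin{itemize}
\item $\mathcal P_{\Omega,\chi}$ is the $Q_\Omega$-orthogonal complement of $\chi$. Since $Q_\Omega(\chi,\chi)>0$, the claim is equivalent to $Q_\Omega$ having signature $(1,m^2-1)$.
\item Convexity of the set of positive tuples, continuity of the Gram matrix, and the classical case $\omega_i=\omega$ reduce this to nondegeneracy.
\item The induction runs HR in dimension $m-1$ $\Rightarrow$ nondegeneracy in dimension $m$ $\Rightarrow$ HR in dimension $m$, and $m=2$ is the Lorentzian determinant form.
\end{itemize}
Compared with the citation, this would give a complete linear-algebra proof, provided the nondegeneracy step is done correctly.

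\textbf{The gap.} To apply the inductive definiteness on a hyperplane $H$, you need $U|_H$ to be primitive \emph{on $H$}, relative to $\Omega'|_H=\omega_1|_H\wedge\cdots\wedge\omega_{m-3}|_H$ and $\chi_H=\omega_{m-2}|_H$. Taking $V=\omega_{m-2}$ does not give this. It yields only the top-degree identity $U\wedge\omega_{m-2}\wedge\Omega=0$ on $W$. After decomposing one factor $\omega_{m-2}$, this is a single weighted sum over the hyperplanes and implies no termwise sign.

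For instance, take $m=3$, $\omega_1=\omega=\sum_j\mathrm i\zeta_j\wedge\bar\zeta_j$, and $U=\operatorname{diag}(-2,1,1)$ in this frame. Then $U$ is primitive on $W$ (trace zero). But on $H_1=\{\zeta_1=0\}$ the restriction $U|_{H_1}=\operatorname{diag}(1,1)$ has $U|_{H_1}\wedge U|_{H_1}$ a positive multiple of the volume form of $H_1$.

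\textbf{The fix.} Use the full radical condition $U\wedge\Omega=0$, which is an identity of $(m-1,m-1)$-forms. Pulling it back along $H\hookrightarrow W$ gives $U|_H\wedge\chi_H\wedge\Omega'|_H=0$, which is exactly primitivity on $H$. Then every term in $0=Q_\Omega(U,U)=\sum_jc_jQ^{H_j}_{\Omega'}(U|_{H_j},U|_{H_j})$, with $c_j>0$, is nonpositive by induction. Hence each term is zero, and so $U|_{H_j}=0$.

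\textbf{Minor points.} The steps you flagged as delicate are not:
\begin{itemize}
\item The hyperplane decomposition is just diagonalization, $\omega_{m-2}=\sum_{j=1}^m\mathrm i\zeta_j\wedge\bar\zeta_j$ in a suitable basis, a finite sum over the coordinate hyperplanes $H_j=\{\zeta_j=0\}$.
\item Your clause ``unless $U|_H$ is proportional to $\omega_{m-2}|_H$'' is vacuous for primitive $U|_H$.
\item ``Almost all $H$'' should read ``each $H_j$''. This suffices because for $m\geq3$ every coefficient $U_{a\bar b}$ survives restriction to some $H_j$ with $j\notin\{a,b\}$.
\end{itemize}
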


This is the real $(1,1)$ case of the linear mixed
Hodge--Riemann theorem of Timorin~\cite{Timorin}, written with
the wedge-product sign convention above; see also
\cite[Proposition~2.1]{DinhNguyen}. Since $Q_\Omega(\chi,\chi)>0$, every real
$(1,1)$-form has the unique $Q_\Omega$-orthogonal decomposition
\[
 U=U_0+
       \frac{Q_\Omega(U,\chi)}{Q_\Omega(\chi,\chi)}\chi,
 \qquad U_0\in\mathcal P_{\Omega,\chi}.
\]
Thus the primitive condition removes the component in the
positive direction $\chi$.
For comparison, in the ordinary case
$\Omega=\omega^{m-2}/(m-2)!$ and $\chi=\omega$, direct expansion gives
\[
 Q_\Omega(U,V)
   =(\Lambda_\omega U)(\Lambda_\omega V)-\ip U V,
 \qquad Q_\Omega(U,\omega)=(m-1)\Lambda_\omega U.
\]
Hence ordinary primitive real $(1,1)$-forms satisfy
$\Lambda_\omega U=0$ and $-Q_\Omega(U,U)=|U|^2$.
For mixed factors the trace is instead $Q_\Omega(U,\chi)$.

\begin{corollary}[Uniform primitive estimate]\label{cor:uniform-mixed-hr}
Fix $0<\varepsilon\leq1$ and $0\leq k\leq m-2$.
For a real $(1,1)$-form $A$ with
$\varepsilon\omega\leq A\leq\omega$, define
\[
 Q_A(U,V)\,dV
   =\frac{U\wedge V\wedge A^k\wedge\omega^{m-k-2}}
           {(m-k-2)!},\qquad h_A=Q_A(\omega,\omega)>0.
\]
There are constants $c,C>0$, depending only on
$m,\varepsilon,k$, such that
\[
 \begin{aligned}
 -Q_A(U,U)&\geq c h_A|U|^2
                  &&\text{if }Q_A(U,\omega)=0,\\
 |Q_A(U,V)|&\leq C h_A|U||V|
                  &&\text{for all real }(1,1)\text{-forms }U,V.
 \end{aligned}
\]
\end{corollary}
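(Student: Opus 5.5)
The plan is a compactness argument. Theorem~\ref{thm:linear-mixed-hr} gives strict positivity for each fixed $A$, and compactness of the set of admissible $A$ upgrades this to a uniform constant. Fix the Hermitian vector space $W$ with its form $\omega$ and volume $dV$. Let $\mathcal K_\varepsilon$ be the set of real $(1,1)$-forms $A$ with $\varepsilon\omega\leq A\leq\omega$. In a unitary coframe these are Hermitian matrices with eigenvalues in $[\varepsilon,1]$, a closed and bounded, hence compact, subset of a finite-dimensional space. The map $A\mapsto Q_A$ is polynomial in the entries of $A$, and hence continuous into bilinear forms on $\Lambda^{1,1}_{\mathbb R}$. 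For each $A\in\mathcal K_\varepsilon$, put
\[
 \Omega_A=\frac{A^k\wedge\omega^{m-k-2}}{(m-k-2)!}.
\]
This is a positive multiple of a wedge product of $m-2$ strictly positive $(1,1)$-forms ($k$ copies of $A$ and $m-k-2$ copies of $\omega$), so it fits the hypotheses of Theorem~\ref{thm:linear-mixed-hr} with $\chi=\omega$.

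\emph{Step 1 (bounds on $h_A$ and the upper bound).} Diagonalize $A$ unitarily relative to $\omega$, with eigenvalues $\ell_i\in[\varepsilon,1]$. The computation \eqref{eq:app-h-formula} holds verbatim with these $\ell_i$. It gives $h_A=k!(m-k)(m-k-1)\sigma_k(\ell)$, and therefore
\[
 c_0\varepsilon^k\leq h_A\leq C_0 \qquad\text{with } c_0,C_0 \text{ depending only on } m,k.
\]
The expansion \eqref{eq:app-mixed-expansion} expresses $Q_A(U,V)$ as a sum of terms $\sigma_k(\ell\mid i,j)$ times bilinear expressions in matrix entries. Each coefficient satisfies $\sigma_k(\ell\mid i,j)\leq\binom{m}{k}$, and each entry is bounded by $|U|$ or $|V|$. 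Hence $|Q_A(U,V)|\leq C_1|U||V|\leq (C_1/(c_0\varepsilon^k))\,h_A|U||V|$, which is the second inequality.

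\emph{Step 2 (the lower bound by compactness).} Set
\[
 \mathcal S=\{(A,U)\in\mathcal K_\varepsilon\times\Lambda^{1,1}_{\mathbb R}: |U|=1,\ Q_A(U,\omega)=0\}.
\]
It is closed, by continuity of $(A,U)\mapsto Q_A(U,\omega)$, and bounded, hence compact. On $\mathcal S$ the function $f(A,U)=-Q_A(U,U)/h_A$ is continuous, since $h_A\geq c_0\varepsilon^k>0$. It is strictly positive, by Theorem~\ref{thm:linear-mixed-hr} applied with $\Omega=\Omega_A$ and $\chi=\omega$: the positive scalar $1/(m-k-2)!$ does not affect the primitive subspace or the sign. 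If $\mathcal S$ is empty there is nothing to prove. Otherwise $f$ attains a minimum $c>0$ on $\mathcal S$. Homogeneity in $U$ then gives $-Q_A(U,U)\geq c\,h_A|U|^2$ whenever $Q_A(U,\omega)=0$. The constant depends only on $m,\varepsilon,k$, because every ingredient is unitarily invariant and all Hermitian spaces of dimension $m$ are isometric. The case $k=0$ needs no separate treatment: there $Q_A$ does not depend on $A$.

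The only point needing care is Step 2. One must make sure that the primitive condition cuts out a closed set as $A$ varies, so that the infimum is attained on a compact set and not approached along a degenerating family. This is exactly why the two-sided bound $\varepsilon\omega\leq A$ is required: it keeps $A$ inside the region where Theorem~\ref{thm:linear-mixed-hr} applies and keeps $h_A$ bounded away from zero. If one wanted explicit constants, an alternative would be an eigenvalue interlacing argument in the diagonal frame. The compactness route avoids that computation and suffices, since only the existence of uniform constants is asserted.
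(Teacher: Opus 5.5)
Your proposal is correct and follows essentially the same route as the paper: compactness of $\{A:\varepsilon\omega\leq A\leq\omega\}$ and of the set of unit primitive pairs $(A,U)$, combined with the pointwise strict positivity from Theorem~\ref{thm:linear-mixed-hr}, yields the uniform constant $c$. The only minor difference is in the upper bound $|Q_A(U,V)|\leq Ch_A|U||V|$: you derive it from the explicit expansion \eqref{eq:app-mixed-expansion} together with the explicit bound $h_A\geq c_0\varepsilon^k$, whereas the paper obtains it by the same continuity-and-compactness argument.
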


\begin{proof}
Identify $W$ unitarily with a fixed Hermitian vector space.
The set $\mathcal K=\{A:\varepsilon\omega\leq A\leq\omega\}$
is compact. The coefficients of $Q_A$ and the strictly positive
quantity $h_A$ depend continuously on $A$.
For each $A\in\mathcal K$, apply Theorem~\ref{thm:linear-mixed-hr}
with $k$ factors equal to $A$, the other $m-k-2$ factors equal
to $\omega$, and $\chi=\omega$. The positive factorial in the
definition of $Q_A$ does not affect definiteness. Therefore
\[
 \frac{-Q_A(U,U)}{h_A}>0
 \quad\text{on}\quad
 \mathcal S=\{(A,U):A\in\mathcal K,\ |U|=1,
                                  \ Q_A(U,\omega)=0\}.
\]
The set $\mathcal S$ is compact, including its varying primitive
subspaces, because its defining constraint is continuous.
The displayed continuous positive function has a positive minimum
$c$ there. Homogeneity proves the first estimate for arbitrary
primitive $U$. Likewise, continuity on the compact set
$\mathcal K\times\{|U|=1\}\times\{|V|=1\}$ bounds
$|Q_A(U,V)|/h_A$ from above, proving the second estimate.
\end{proof}

To apply this to \eqref{eq:mixed-definitions}, at each point set
$A=(\rho+\delta\omega)/(R+\delta)$, so
$\varepsilon\omega\leq A\leq\omega$. Then
\[
 Q_{k,\delta}=(R+\delta)^kQ_A,\qquad
 h=(R+\delta)^kh_A.
\]
The corollary gives the primitive coercivity uniformly in the
point and in $\delta>0$. In particular, for
$E=\rho-a\omega$ from \eqref{eq:mixed-trace},
$Q_{k,\delta}(E,\omega)=0$ and
$-Q_{k,\delta}(E,E)\geq ch|E|^2$.
This normalization is used only in tangent-space algebra; the
normalized $A$ is not substituted for a closed differential form
in Stokes' theorem.

\end{document}